\let\oldmarginpar\marginpar
\renewcommand\marginpar[1]{\-\oldmarginpar[\raggedleft\footnotesize #1]%
{\raggedright\footnotesize #1}}
\begin{document}

\newtheorem{theorem}{Theorem}[section]
\newtheorem{corollary}[theorem]{Corollary}
\newtheorem{lemma}[theorem]{Lemma}
\newtheorem{proposition}[theorem]{Proposition}
\theoremstyle{definition}
\newtheorem{definition}[theorem]{Definition}
\theoremstyle{remark}
\newtheorem{remark}[theorem]{Remark}
\theoremstyle{definition}
\newtheorem{example}[theorem]{Example}

\def\rank{{\text{rank}\,}}

\numberwithin{equation}{section}

\title[Almost h-conformal slant submersions]{Almost h-conformal slant submersions from almost quaternionic Hermitian manifolds}

\author{Kwang-Soon Park}
\address{KP: Division of General Mathematics, Room 4-107, Changgong Hall, University of Seoul, Seoul 02504, Republic of Korea}
\email{parkksn@gmail.com}

\author{JeongHyeong Park}
\address{JHP: Department of Mathematics, Sungkyunkwan University, Suwon, 16419, Republic of Korea}
\email{parkj@skku.edu}

\keywords{horizontally conformal submersion, quaternionic manifold,
totally geodesic}

\subjclass[2000]{53C15, 53C26, 53C43.}   

\begin{abstract}
We introduce the notions of h-conformal slant submersions and almost
h-conformal slant submersions from almost quaternionic Hermitian
manifolds onto Riemannian manifolds as a generalization of
Riemannian submersions, horizontally conformal submersions, slant
submersions, h-slant submersions, almost h-slant submersion and
conformal slant submersions.

We investigate several properties of these including the
integrability of distributions, the geometry of foliations and the
conditions for such maps to be totally geodesic. Further we give
some examples of such maps.
\end{abstract}

\maketitle
\section{Introduction}\label{intro}
\addcontentsline{toc}{section}{Introduction}

{ One of the most significant works on Riemannian submersions was
introduced by B. O'Neill \cite{O}
in 1966.
 Building on this work, B. Watson \cite{W} introduced the notion of almost Hermitian submersions in 1976.
 In this study, he investigated several differential geometric properties between base manifolds and total manifolds as well as fibers.

In addition, during the 1970s, as a generalization of  Riemannian
submersions, B. Fuglede \cite{F} and T. Ishihara \cite{I} introduced
a horizontally conformal submersion.

Later in 1997, S. Gudmundsson and J. C. Wood \cite{GW} developed
conformal holomorphic submersions between almost Hermitian
manifolds. And they found the condition for a conformal holomorphic
submersion to be a harmonic morphism.

This work was further developed by B. Sahin \cite{S} when he defined
a slant submersion from an almost Hermitian manifold onto a
Riemannian manifold in 2011. This development led him to obtaining
several properties including the integrability of distributions, the
geometry of foliations, the conditions for such maps to be harmonic
or totally geodesic as well as decomposition theorems.

Related work was conducted by Park \cite{P} in 2012, with the
generalization of a slant submersion from an almost Hermitian
manifold, introducing the concept of an h-slant submersion and an
almost h-slant submersion from an almost quaternionic Hermitian
manifold.

In 2017, M. A. Akyol and B. Sahin \cite{AS} defined a conformal
slant submersion from an almost Hermitian manifold onto a Riemannian
manifold.

If we consider a Riemannian submersion
 with some additional
conditions, we get many different types of submersions: An almost
Hermitian submersion \cite{W}, a slant submersion \cite{C, S},
a semi-slant submersion\cite{PP}, a quaternionic submersion
\cite{IMV}, an anti-invariant quaternionic submersion and an
anti-invariant octonion submersion \cite{GIP},
 a horizontally conformal
submersion \cite{G2, BW}, a conformal anti-invariant submersion
\cite{AS2}, an h-conformal semi-invariant submersion and an almost
h-conformal semi-invariant submersion \cite{P4}. We also refer
\cite{G,  P2, S2, LPSS, P3}.


The Riemannian submersions have had several applications in physics
throughout the years. These have included use as inputs to the
Yang-Mills theory \cite{BL2, W2}, Kaluza-Klein theory \cite{IV, BL},
supergravity and superstring theories \cite{IV2, M}. Furthermore,
the quaternionic K\"{a}hler manifolds have been applied
for nonlinear $\sigma-$models with supersymmetry \cite{CMMS}.

This paper has been structured as follows. In section 2 we review
several notions, which are required in the following sections. In
section 3 we introduce the notions of h-conformal slant submersions
and almost h-conformal slant submersions and investigate several of
their properties: the geometry of foliations, the integrability of
distributions, the harmonicity of such maps and the conditions for
such maps to be totally geodesic and some others.

In the final section 4, we provide some examples of h-conformal
slant submersions and almost h-conformal slant submersions.

\section{Preliminaries}\label{prelim}

In this section we review several important concepts required for
the following sections.

%
%
%

{{ Let $(M, g_M)$ and $(N, g_N)$ be Riemannian manifolds.
 We say that a smooth surjective
map $F$ from $(M, g_M)$ to $(N, g_N)$ is a {\em Riemannian submersion} if \\
(i) the differential $(F_*)_p$  is surjective
for any $p\in M$,\\ 
(ii) for any $p\in M$, $(F_*)_p$ is an isometry between $(\ker
(F_*)_p)^{\perp}$ and $T_{F(p)} N$,
 where $(\ker (F_*)_p)^{\perp}$ is
the orthogonal complement of the space $\ker (F_*)_p$ in the tangent
space $T_p M$ of $M$ at $p$ \cite{FIP, GLP,  O}.}}

The map $F$ is said to be {\em horizontally weakly conformal}
\cite{BW} if it satisfies either (i) $(F_*)_p = 0$ or (ii) $(F_*)_p$
is surjective and there exists a positive number $\lambda (p) > 0$
such that
\begin{equation}\label{eq1}
g_N ((F_*)_p X, (F_*)_p Y) = \lambda^2 g_M (X, Y) \quad \text{for} \
X,Y\in (\ker (F_*)_p)^{\perp},
\end{equation}
at any point $p\in M$. In the event that the map is applicable only
at a certain point $p$, then it is called {\em horizontally weakly
conformal} at $p$. If it holds with type (i) then the point $p$ is
called a {\em critical point}, similarly if it holds with type (ii),
we call the point $p$ a {\em regular point}. Further, the positive
number $\lambda (p)$ is called a {\em dilation} of $F$ at $p$.
{{A horizontally weakly conformal map $F$ is said to be a {\em horizontally conformal submersion} if $F$ has no critical
points.}}

Let $F : (M,g_M) \mapsto (N,g_N)$ be a horizontally conformal
submersion.


Let $\mathcal{V}$ be the vertical projection (distribution) and
$\mathcal{H}$ the horizontal projection (distribution).

The (O'Neill) tensors $\mathcal{T}$ and $\mathcal{A}$ were defined
by
\begin{eqnarray}
  \mathcal{A}_E F &=& \mathcal{H}\nabla_{\mathcal{H}E} \mathcal{V}F+\mathcal{V}\nabla_{\mathcal{H}E} \mathcal{H}F  \label{eq3} \\
   \mathcal{T}_E F &=& \mathcal{H}\nabla_{\mathcal{V}E} \mathcal{V}F+\mathcal{V}\nabla_{\mathcal{V}E}
   \mathcal{H}F  \label{eq4}
\end{eqnarray}
for  $E, F\in \Gamma(TM)$, where $\nabla$ is the Levi-Civita
connection of $g_M$ (\cite{O, FIP}). Then we easily obtain
 that
\begin{equation}\label{eqn22}
g_M (\mathcal{T}_U V, W) = -g_M (V, \mathcal{T}_U W)
\end{equation}
\begin{equation}\label{eqn23}
g_M (\mathcal{A}_U V, W) = -g_M (V, \mathcal{A}_U W)
\end{equation}
for $U,V,W\in \Gamma(TM)$.

We define $\widehat{\nabla}_U V := \mathcal{V}\nabla_U V$ for
$U,V\in \Gamma(\ker F_*)$.

Let $F : (M, g_M) \mapsto (N, g_N)$ be a smooth map. Then the {\em
second fundamental form} of $F$ is provided by
$$
(\nabla F_*)(X,Y) := \nabla^F _X F_* Y-F_* (\nabla _XY) \quad
\text{for} \ X,Y\in \Gamma(TM),
$$
where $\nabla^F$ is the pullback connection and $\nabla$ is the
Levi-Civita connection of the metrics $g_M$ and $g_N$ \cite{BW}.

We note that if the tension field $\tau(F) := trace (\nabla F_*)=0$,
then $F$ is called a {\em harmonic} map and if $(\nabla F_*)(X,Y)=0$
for $X,Y\in \Gamma (TM)$ then $F$ is called a {\em totally geodesic}
map  \cite{BW}.

\begin{lemma}\label{lem1} \cite{U}
Let $(M, g_M)$ and $(N, g_N)$ be Riemannian manifolds and $F :
(M,g_M)\mapsto (N,g_N)$ a smooth map. Then we obtain
\begin{equation}\label{eq5}
\nabla_X^F F_*Y - \nabla_Y^F F_*X - F_*([X,Y]) = 0
\end{equation}
for $X,Y\in \Gamma(TM)$.
\end{lemma}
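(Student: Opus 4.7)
The plan is to first observe that the identity \eqref{eq5} is exactly the statement that the second fundamental form $(\nabla F_*)$ is symmetric in its two arguments. Indeed, from the definition given just above the lemma,
\[
(\nabla F_*)(X,Y) - (\nabla F_*)(Y,X) = \nabla_X^F F_*Y - \nabla_Y^F F_*X - F_*(\nabla_X Y - \nabla_Y X),
\]
and since the Levi-Civita connection $\nabla$ of $g_M$ is torsion-free, $\nabla_X Y - \nabla_Y X = [X,Y]$. Thus \eqref{eq5} is equivalent to $(\nabla F_*)(X,Y) = (\nabla F_*)(Y,X)$, and it suffices to establish this symmetry.

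My approach is to drop into local coordinates. Take coordinates $(x^i)$ on a neighborhood in $M$ and $(y^\alpha)$ on a neighborhood in $N$, write $F = (F^\alpha)$, and set $F^\alpha_i := \partial F^\alpha/\partial x^i$, $F^\alpha_{ij} := \partial^2 F^\alpha/\partial x^i \partial x^j$. With the shorthand $\partial_i$ for $\partial/\partial x^i$ and $\partial_\alpha$ for $\partial/\partial y^\alpha$ (the latter viewed along $F$), and writing $X = X^i \partial_i$, $Y = Y^j \partial_j$, one has $F_*Y = Y^j F^\alpha_j\,\partial_\alpha$. The pullback connection $\nabla^F$ is characterized by the Leibniz rule together with $\nabla_X^F (V \circ F) = \nabla^N_{F_*X} V$ for $V \in \Gamma(TN)$, where $\nabla^N$ denotes the Levi-Civita connection of $g_N$. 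Applying these rules yields
\[
\nabla_X^F F_*Y = \bigl(X(Y^j)\,F^\alpha_j + X^i Y^j F^\alpha_{ij}\bigr)\partial_\alpha + X^i Y^j F^\alpha_j F^\beta_i\,\Gamma^\gamma_{\beta\alpha}\,\partial_\gamma,
\]
where $\Gamma^\gamma_{\beta\alpha}$ are the Christoffel symbols of $\nabla^N$.

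Subtracting the analogous expression for $\nabla_Y^F F_*X$, the second-derivative terms $X^i Y^j F^\alpha_{ij}$ cancel by equality of mixed partials, and the Christoffel-symbol terms cancel because $\Gamma^\gamma_{\alpha\beta}$ is symmetric in its lower indices (torsion-freeness of $\nabla^N$). Only the first-order terms survive, and they combine to give
\[
\nabla_X^F F_*Y - \nabla_Y^F F_*X = \bigl(X(Y^j) - Y(X^j)\bigr)\,F^\alpha_j\,\partial_\alpha = F_*[X,Y],
\]
which is precisely \eqref{eq5}. There is no real conceptual obstacle; the main demand is careful index bookkeeping, and the content of the lemma is that the torsion-freeness of both Levi-Civita connections passes to the pullback setting via the defining property of $\nabla^F$.
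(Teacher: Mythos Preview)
Your argument is correct: you rightly observe that \eqref{eq5} is equivalent to the symmetry of the second fundamental form $(\nabla F_*)$, and your local-coordinate computation establishing that symmetry is accurate and standard. The cancellation of the second-derivative terms by equality of mixed partials and of the Christoffel terms by symmetry of $\Gamma^\gamma_{\alpha\beta}$ is exactly how this goes.

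As for comparison: the paper does not actually prove this lemma. It is stated with a citation to Urakawa \cite{U} and no argument is given; the subsequent remark simply uses \eqref{eq5} to conclude that $\nabla F_*$ is symmetric. So your proposal supplies a proof where the paper defers to the literature, and there is nothing further to compare.
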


\begin{remark}
 By \eqref{eq5}, the second fundamental form
$\nabla F_*$ is shown to be symmetric. Additionally we obtain
\begin{equation}\label{eqn5}
[V,X]\in \Gamma(\ker F_*)
\end{equation}
for $V\in \Gamma(\ker F_*)$ and $X\in \Gamma((\ker F_*)^{\perp})$.
\end{remark}

\begin{proposition}\label{prop1}\cite{G2}
Let  $F : (M, g_M) \mapsto (N, g_N)$ be a horizontally conformal
submersion with dilation $\lambda$. Then we get
\begin{equation}\label{eq7}
\mathcal{A}_X Y = \frac{1}{2}\{\mathcal{V}[X,Y] - \lambda^2 g_M (X,
Y) \nabla_{\mathcal{V}} (\frac{1}{\lambda^2})\},
\end{equation}
for $X,Y\in \Gamma((\ker F_*)^{\perp})$,
where $\nabla_{\mathcal{V}}$ is the gradient vector field
in the vertical distribution $\mathcal{V} \subset TM$ which is
expressed by
$\displaystyle{\nabla_{\mathcal{V}} f = \sum_{i=1}^{m} W_i(f) W_i}$
for $f\in C^{\infty}(M)$ and a local orthonormal frame $\{ W_1,
\cdots, W_m \}$ of $\mathcal{V}$, where $m = \dim \mathcal{V}$.
\end{proposition}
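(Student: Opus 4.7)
The plan is to evaluate both sides on an arbitrary vertical vector and use the Koszul formula together with the defining identity of a horizontally conformal submersion. First, from \eqref{eq3}, for $X, Y \in \Gamma((\ker F_*)^\perp)$ the tensor $\mathcal{A}_X Y$ reduces to $\mathcal{V}\nabla_X Y$, so it suffices to compute $g_M(\nabla_X Y, V)$ for an arbitrary vertical $V$. Because $\mathcal{A}$ is tensorial in both arguments and the right hand side of \eqref{eq7} is $C^\infty(M)$-bilinear in $X,Y$ (note $\mathcal{V}[X, fY] = X(f)\mathcal{V} Y + f\mathcal{V}[X,Y] = f\mathcal{V}[X,Y]$, since $Y$ is horizontal), we may assume without loss of generality that $X$ and $Y$ are basic horizontal fields, i.e. $F$-related to vector fields $X', Y'$ on $N$.

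Next I would invoke the Koszul formula
\[
2g_M(\nabla_X Y, V) = X g_M(Y,V) + Y g_M(X,V) - V g_M(X,Y) + g_M([X,Y], V) + g_M([V,X], Y) - g_M([Y,V], X).
\]
The first two terms vanish by orthogonality of $X,Y$ to the vertical distribution. By \eqref{eqn5} both $[V,X]$ and $[V,Y]$ lie in $\Gamma(\ker F_*)$, so they are orthogonal to the horizontal fields $X,Y$, and the last two terms vanish as well. Thus
\[
2g_M(\mathcal{A}_X Y, V) = -V g_M(X,Y) + g_M(\mathcal{V}[X,Y], V).
\]

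The key step is to compute $V g_M(X,Y)$ using the dilation. From \eqref{eq1} we have $g_M(X,Y) = \lambda^{-2} g_N(F_*X, F_*Y)$, and since $X,Y$ are basic, $g_N(F_*X, F_*Y)$ is the pullback of a function on $N$, hence constant along each fiber, so $V g_N(F_*X, F_*Y) = 0$. Therefore
\[
V g_M(X, Y) = V(\lambda^{-2}) \cdot \lambda^2 g_M(X, Y).
\]
Since $V$ is vertical, the definition of the vertical gradient gives $V(\lambda^{-2}) = g_M(V, \nabla_{\mathcal{V}}(\lambda^{-2}))$. Substituting back yields
\[
2g_M(\mathcal{A}_X Y, V) = g_M\bigl(V,\ \mathcal{V}[X,Y] - \lambda^2 g_M(X,Y)\, \nabla_{\mathcal{V}}(\lambda^{-2})\bigr),
\]
and since $\mathcal{A}_X Y$ is vertical and $V$ ranges over all vertical fields, \eqref{eq7} follows.

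The main obstacle, and the place where the conformal factor actually enters, is the computation of $V g_M(X,Y)$: for a genuine Riemannian submersion ($\lambda \equiv 1$) this derivative vanishes and one recovers O'Neill's classical identity $\mathcal{A}_X Y = \tfrac{1}{2}\mathcal{V}[X,Y]$, whereas the non-constancy of $\lambda$ along fibers produces the extra gradient term. The reduction to basic vector fields needs some care, since both the identity $V g_N(F_*X, F_*Y) = 0$ and the verticality of $[V,X]$ in \eqref{eqn5} rely on that assumption.
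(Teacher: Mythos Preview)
Your argument is correct and is in fact the standard proof of this identity (essentially the one in Gudmundsson's thesis). Note, however, that the paper does \emph{not} supply its own proof of this proposition: it is quoted from \cite{G2} and stated without proof, so there is nothing in the paper to compare against. Your reduction to basic horizontal fields, application of the Koszul formula, and use of the conformality relation $g_M(X,Y)=\lambda^{-2}g_N(F_*X,F_*Y)$ to compute $V\,g_M(X,Y)$ are exactly the expected steps, and the final remark about recovering O'Neill's formula when $\lambda\equiv 1$ is apt.

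One minor comment: you are right to emphasize that the verticality of $[V,X]$ genuinely uses that $X$ is basic. The paper's \eqref{eqn5} is phrased for arbitrary horizontal $X$, which is a slight overstatement; your care on this point is warranted, and since both sides of \eqref{eq7} are $C^\infty(M)$-bilinear (as you verified), the reduction to basic fields is legitimate.
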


\begin{proposition}\label{prop2}
Let  $F : (M, g_M) \mapsto (N, g_N)$ be a horizontally conformal
submersion.  Then we obtain
\begin{equation}\label{eq07}
\mathcal{T}_V W = \mathcal{T}_W V
\end{equation}
for $V,W\in \Gamma(\ker F_*)$.
\end{proposition}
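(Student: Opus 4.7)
The plan is to reduce the claimed symmetry to the integrability of the vertical distribution, which in turn follows from Lemma \ref{lem1}.

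First I would unpack the definition of $\mathcal{T}$ given in \eqref{eq4} for vertical arguments. Since $V, W \in \Gamma(\ker F_*) = \Gamma(\mathcal{V})$, we have $\mathcal{V}V = V$, $\mathcal{H}V = 0$, and likewise for $W$, so the second summand in \eqref{eq4} vanishes and
$$\mathcal{T}_V W = \mathcal{H}\nabla_V W, \qquad \mathcal{T}_W V = \mathcal{H}\nabla_W V.$$
Subtracting gives
$$\mathcal{T}_V W - \mathcal{T}_W V = \mathcal{H}(\nabla_V W - \nabla_W V) = \mathcal{H}[V,W],$$
using that the Levi-Civita connection $\nabla$ is torsion-free.

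Next I would show $[V,W] \in \Gamma(\ker F_*)$, i.e.\ $\mathcal{H}[V,W] = 0$. The cleanest route is to apply Lemma \ref{lem1} with $X=V$ and $Y=W$: since $F_* V = 0$ and $F_* W = 0$, equation \eqref{eq5} yields $F_*([V,W]) = 0$, and hence $[V,W] \in \Gamma(\ker F_*)$. Consequently $\mathcal{H}[V,W] = 0$, and combining with the display above gives $\mathcal{T}_V W = \mathcal{T}_W V$.

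There is no real obstacle here; the only point that requires any care is recognizing that the horizontal conformality hypothesis plays essentially no role beyond ensuring that $F$ is a submersion (so that $\ker F_*$ is a genuine distribution) and that the vertical/horizontal splitting is well defined. The symmetry of $\mathcal{T}$ on vertical vectors is really a statement about the integrability of $\ker F_*$, which Lemma \ref{lem1} supplies for free.
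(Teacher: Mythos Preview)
Your proof is correct and follows essentially the same route as the paper: both reduce $\mathcal{T}_V W - \mathcal{T}_W V$ to $\mathcal{H}[V,W]$ via torsion-freeness and then use that $[V,W]$ is vertical. The only cosmetic differences are that the paper tests against an arbitrary horizontal $X$ rather than working with the difference directly, and it leaves the verticality of $[V,W]$ implicit, whereas you justify it explicitly via Lemma~\ref{lem1}.
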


\begin{proof}
Given $V,W\in \Gamma(\ker F_*)$ and $X\in \Gamma((\ker
F_*)^{\perp})$, we obtain
\begin{align*}
g_M(\mathcal{T}_V W, X)
  &= g_M(\mathcal{H}\nabla_V W, X) = g_M(\mathcal{H}(\nabla_W V + [V,W]), X)    \\
  &= g_M(\mathcal{H}\nabla_W V, X) = g_M(\mathcal{T}_W V, X),
\end{align*}
which implies (\ref{eq07}).
\end{proof}

\begin{remark}
Considering (\ref{eq7}), for a horizontally conformal submersion
$F$, $\mathcal{A}_X Y = -\mathcal{A}_Y X$ generally does not hold
for $X,Y\in \Gamma((\ker F_*)^{\perp})$.
\end{remark}

\begin{lemma}\label{lem1}\cite{BW}
Let  $F : (M, g_M) \mapsto (N, g_N)$ be a horizontally conformal
submersion with dilation $\lambda$. Then we get
\begin{eqnarray}
 (\nabla F_*)(X,Y) &=& X(\ln \lambda) F_*Y + Y(\ln \lambda) F_*X - g_M (X, Y) F_* (\nabla \ln \lambda)   \label{eqn77}
\end{eqnarray}
for $X,Y\in \Gamma((\ker F_*)^{\perp})$.
\end{lemma}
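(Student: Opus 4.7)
The plan is to verify the identity by testing both sides against $F_*Z$ for an arbitrary horizontal vector field $Z$, and to use Koszul's formula on both $(M, g_M)$ and $(N, g_N)$ in combination with the conformal relation $g_N(F_*A, F_*B) = \lambda^2 g_M(A, B)$ from \eqref{eq1}.

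Because $(\nabla F_*)(X,Y)$ is tensorial and symmetric in $X,Y$, and because $F_*$ restricted to $(\ker F_*)^\perp$ is a pointwise bijection onto $TN$, it suffices to prove the identity when $X, Y, Z$ are basic horizontal fields, i.e., horizontal lifts of vector fields $\tilde X, \tilde Y, \tilde Z$ on $N$, and then pair with $F_*Z$. For such basic fields, $\nabla^F_X F_*Y = (\nabla^N_{\tilde X} \tilde Y) \circ F$, so Koszul's formula on $(N, g_N)$ gives a closed expression for $2 g_N(\nabla^F_X F_*Y, F_*Z)$ in terms of $\tilde X g_N(\tilde Y, \tilde Z)$, its cyclic permutations, and the brackets $[\tilde X, \tilde Y]$, etc.

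Next, I substitute $g_N(\tilde Y, \tilde Z) \circ F = \lambda^2 g_M(Y, Z)$ and use $F_*[X, Y] = [\tilde X, \tilde Y] \circ F$ (valid for basic fields by \eqref{eqn5} together with projectability) to rewrite every bracket term on $N$ as $\lambda^2$ times the corresponding bracket on $M$; for instance, $g_N([\tilde X, \tilde Y], \tilde Z) \circ F = \lambda^2 g_M([X, Y], Z)$. The Leibniz rule produces extra terms such as $X(\lambda^2) g_M(Y, Z)$, while the remaining part of the right-hand side reassembles, via Koszul on $(M, g_M)$, to $2\lambda^2 g_M(\nabla_X Y, Z)$. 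Using $X(\lambda^2) = 2 \lambda^2 X(\ln \lambda)$ and the identity $\lambda^2 g_M(\nabla_X Y, Z) = g_N(F_*\nabla_X Y, F_*Z)$, I obtain
\begin{equation*}
g_N((\nabla F_*)(X, Y), F_*Z) = X(\ln \lambda) g_N(F_*Y, F_*Z) + Y(\ln \lambda) g_N(F_*X, F_*Z) - Z(\ln \lambda) g_N(F_*X, F_*Y).
\end{equation*}

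Finally, the last term equals $g_M(X, Y)\, g_N(F_*Z, F_*(\nabla \ln \lambda))$, since both reduce to $\lambda^2 g_M(X, Y)\, Z(\ln \lambda)$ via the conformal relation. Because this equality holds for every horizontal $Z$ and $F_*$ is surjective onto $TN$ there, \eqref{eqn77} follows. The main obstacle is the bookkeeping: one must carefully track how derivatives fall on the factor $\lambda^2$ versus on the inner products $g_M(\cdot, \cdot)$ when transporting Koszul's formula between $N$ and $M$, and verify that the bracket terms on $N$ match the horizontal projections of brackets on $M$ for basic fields. Once these pieces are aligned, the formula drops out directly.
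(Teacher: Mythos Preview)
Your argument is correct and is essentially the standard proof of this identity (as in \cite{BW}): reduce to basic horizontal fields, apply Koszul's formula on $N$, transport each term to $M$ via the conformal relation $g_N(F_*A,F_*B)=\lambda^2 g_M(A,B)$, and recognize the remaining non-$\lambda^2$-derivative terms as $2\lambda^2 g_M(\nabla_X Y,Z)$ via Koszul on $M$. The paper itself does not supply a proof; the lemma is simply quoted from \cite{BW}, so there is nothing to compare against beyond noting that your approach matches the classical one.

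One small remark: your appeal to \eqref{eqn5} for $F_*[X,Y]=[\tilde X,\tilde Y]\circ F$ is unnecessary. For basic fields $X,Y$ that are $F$-related to $\tilde X,\tilde Y$, the identity $F_*[X,Y]=[\tilde X,\tilde Y]\circ F$ is just the naturality of the Lie bracket under $F$-relatedness; \eqref{eqn5} concerns brackets of a vertical with a horizontal field and plays no role here. This does not affect the validity of your proof.
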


Let $(M, g_M, J)$ be an almost Hermitian manifold, where $J$ is an
almost complex structure on $M$.
A horizontally conformal submersion $F : (M,g_M,J)\mapsto (N,g_N)$
is called a {\em conformal slant submersion} \cite{AS} if the angle
$\theta = \theta (X)$ between $JX$ and the space $\ker (F_*)_p$ is
constant for nonzero $X\in \ker (F_*)_p$ and $p\in M$.

{{Let $M$ be a $C^{\infty}$-manifold of dimension $4m$ and let $E$
be a subbundle of $\text{End} (TM)$ such that given $p\in M$ with a
neighborhood $U$, there exists a local basis $\{ I,J,K \}$ of
sections of $E$ on $U$ such that
\begin{equation}\label{hypercom}
I^2 = J^2 = K^2 = -id, \quad IJ = -JI = K.
\end{equation}
Then the subbundle $E$ is said to be an {\em almost quaternionic
structure} on $M$ and $(M,E)$ an {\em almost quaternionic manifold}
\cite{AM}.
Additionally, if we have a Riemannian metric $g$ on $(M, E)$ such
that
\begin{equation}\label{hypermet}
g(RX, RY)=g(X, Y)
\end{equation}
for $R\in \{ I,J,K \}$ and $X, Y\in \Gamma(TM)$, then we call
$(M,E,g)$ an {\em almost quaternionic Hermitian manifold}
\cite{IMV}. The basis $\{ I,J,K \}$ satisfying (\ref{hypercom}) and
(\ref{hypermet}) is called a {\em quaternionic Hermitian basis}.

An almost quaternionic Hermitian manifold $(M,E,g_M)$ is called a
{\em quaternionic K\"{a}hler manifold} if the subbundle $E$ is
preserved by the Levi-Civita connection $\nabla$ of the metric $g_M$
(\cite{I0, IMV}).

If we have a global quaternionic Hermitian basis $\{ I,J,K \}$ of
sections of $E$ on $M$ such that $R$ is parallel with respect to the
Levi-Civita connection $\nabla$ of the metric $g$ for any $R\in \{
I,J,K \}$, then we call $(M, E, g )$ a {\em hyperk\"{a}hler
manifold}, $(I, J, K, g )$ a {\em hyperk\"{a}hler structure} on $M$,
and $g$ a {\em hyperk\"{a}hler metric} \cite{B}.}}

Let $(M, E, g_M)$ be an almost quaternionic Hermitian manifold and
$(N, g_N)$ a Riemannian manifold. A Riemannian submersion $F : (M,
E, g_M) \mapsto (N, g_N)$ is said to be an {\em almost h-slant
submersion} if given a point $p\in M$ with a neighborhood $U$, there
exists a
 quaternionic Hermitian basis $\{ I,J,K \}$ of sections of
$E$ on $U$ such that given $R\in \{ I,J,K \}$, the angle $\theta_R =
\theta_R(X)$ between $RX$ and the space $\ker (F_*)_q$ is constant
for nonzero $X\in \ker (F_*)_q$ and $q\in U$ \cite{P}.

We call such a basis $\{ I,J,K \}$ an {\em almost h-slant basis}.

Moreover, an almost h-slant submersion $F : (M, E, g_M) \mapsto (N,
g_N)$ is called an {\em h-slant submersion} if $\theta=\theta_I
=\theta_J =\theta_K$ \cite{P}.

We call such a basis $\{ I,J,K \}$ an {\em h-slant basis} and the
angle $\theta$ an {\em h-slant angle}.

Throughout this paper, we will use the above notations.


\section{Almost h-conformal slant submersions}\label{slant}

In this section, we introduce the notions of h-conformal slant
submersions and almost h-conformal slant submersions from almost
quaternionic Hermitian manifolds onto Riemannian manifolds.
 We will study the integrability of distributions and
 the geometry of foliations. And we investigate the harmonicity of such maps
 and the conditions for such maps to be totally geodesic.

\begin{definition}
Let $(N,g_N)$  be a Riemannian manifold and  $(M,E,g_M)$ an almost
quaternionic Hermitian manifold. Let $F : (M,E,g_M)\mapsto (N,g_N)$
be a horizontally conformal submersion. We call the map $F$ an {\em
almost h-conformal slant submersion} if given a point $p\in M$ with
a neighborhood $U$, there exists a  quaternionic Hermitian basis $\{
I,J,K \}$ of sections of $E$ on $U$ such that given $R\in \{ I,J,K
\}$, the angle $\theta_R = \theta_R(X)$ between $RX$ and the space
$\ker (F_*)_q$ is constant for nonzero $X\in \ker (F_*)_q$ and $q\in
U$.
\end{definition}

We call such a basis $\{ I,J,K \}$ an {\em almost h-conformal slant
basis} and the angles $\theta_I$, $\theta_J$, $\theta_K$ {\em slant
angles}.

\begin{definition}
Let $(N,g_N)$  be a Riemannian manifold and  $(M,E,g_M)$ an almost
quaternionic Hermitian manifold. Let $F : (M,E,g_M)\mapsto (N,g_N)$
be an almost h-conformal slant submersion. We call the map $F$ an
{\em h-conformal slant submersion} if $\theta = \theta_I = \theta_J
= \theta_K$, where $\theta_I$, $\theta_J$, $\theta_K$ are slant
angles.
\end{definition}

We call such a basis $\{ I,J,K \}$ an {\em h-conformal slant basis}
and the angle $\theta$ {\em h-slant angle}.

Let $F$ be an almost h-conformal slant submersion from an almost
quaternionic Hermitian manifold $(M,E,g_M)$ onto a Riemannian
manifold $(N,g_N)$. Given a point $p\in M$ with a neighborhood $U$,
we get an almost h-conformal slant basis $\{ I,J,K \}$ of sections
of $E$ on $U$.

Then given $V\in \Gamma(\ker F_*)$ and $R\in \{ I,J,K \}$, we have
\begin{equation}\label{eq08}
RV = \phi_R V + \omega_R V,
\end{equation}
where $\phi_R V\in \Gamma(\ker F_*)$ and $\omega_R V\in \Gamma((\ker
F_*)^{\perp})$.

Given $X\in \Gamma((\ker F_*)^{\perp})$ and $R\in \{ I,J,K \}$, we
write
\begin{equation}\label{eq8}
RX = B_R X + C_R X,
\end{equation}
where $B_R X\in \Gamma(\ker F_*)$ and $C_R X\in \Gamma((\ker
F_*)^{\perp})$.

Given $R\in \{ I,J,K \}$, we obtain the orthogonal decomposition
\begin{equation}\label{eq09}
(\ker F_*)^{\perp} = \omega_R (\ker F_*) \oplus \mu^R.
\end{equation}
We can easily check that $\mu^R$ is $R$-invariant for $R\in \{ I,J,K
\}$.

Furthermore, given $V\in \Gamma(\ker F_*)$, $X\in \Gamma((\ker
F_*)^{\perp})$, and $R\in \{ I,J,K \}$, from $R^2 = -id$, we get
\begin{equation}\label{eq09-1}
\phi_R^2 V + B_R \omega_R V = -V,
\end{equation}
\begin{equation}\label{eq09-2}
\omega_R\phi_R V + C_R \omega_R V = 0,
\end{equation}
\begin{equation}\label{eq09-3}
\phi_R B_R X + B_R C_R X = 0,
\end{equation}
\begin{equation}\label{eq09-4}
\omega_R B_R X + C_R^2 X = -X.
\end{equation}
And we also have
\begin{align*}
R\omega_R V
  &= B_R\omega_R V + C_R\omega_R V = -V - \phi_R^2 V - \omega_R \phi_R V    \\
  &= -\sin^2 \theta_R V - \omega_R \phi_R V.
\end{align*}

Then we easily obtain the following:

\begin{lemma}\label{lem04}
Let $(M,I,J,K,g_M)$ be a hyperk\"{a}hler manifold and $(N, g_N)$ a
Riemannian manifold. Let $F : (M,I,J,K,g_M) \mapsto (N, g_N)$ be an
almost h-conformal slant submersion with $(I,J,K)$ an almost
h-conformal slant basis. Then we have

\begin{enumerate}
\item
\begin{align*}
  &\widehat{\nabla}_V \phi_R W + \mathcal{T}_V \omega_R W = \phi_R \widehat{\nabla}_V W + B_R \mathcal{T}_V W    \\
  &\mathcal{T}_V \phi_R W + \mathcal{H}\nabla_V \omega_R W = C_R \mathcal{T}_V W + \omega_R \widehat{\nabla}_V W
\end{align*}
for $V,W\in \Gamma(\ker F_*)$ and $R\in \{ I,J,K \}$.
\item
\begin{align*}
  &\mathcal{A}_X C_R Y + \mathcal{V}\nabla_X B_R Y = \phi_R \mathcal{A}_X Y + B_R \mathcal{H} \nabla_X Y    \\
  &\mathcal{H} \nabla_X C_R Y + \mathcal{A}_X B_R Y = \omega_R\mathcal{A}_X Y + C_R \mathcal{H}\nabla_X Y
\end{align*}
for $X,Y\in \Gamma((\ker F_*)^{\perp})$ and $R\in \{ I,J,K \}$.
\item
\begin{align*}
  &\mathcal{A}_X \omega_R V + \mathcal{V}\nabla_X \phi_R V = B_R \mathcal{A}_X V + \phi_R \mathcal{V}\nabla_X V  \\
  &\mathcal{H}\nabla_X \omega_R V + \mathcal{A}_X \phi_R V = C_R \mathcal{A}_X V + \omega_R\mathcal{V}\nabla_X V
\end{align*}
for $V\in \Gamma(\ker F_*)$, $X\in \Gamma((\ker F_*)^{\perp})$, and
$R\in \{ I,J,K \}$.
\end{enumerate}
\end{lemma}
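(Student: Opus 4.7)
The plan is to exploit the hyperk\"{a}hler hypothesis, which gives $\nabla R = 0$ for every $R \in \{I,J,K\}$, so that $\nabla_E(RF) = R\,\nabla_E F$ for all vector fields $E,F$ on $M$. Each of the three pairs of identities will then drop out by computing $\nabla_E(RF)$ in two ways, decomposing according to the splitting $TM = \ker F_* \oplus (\ker F_*)^{\perp}$, and equating vertical and horizontal components.

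For part (1), I would take $V,W\in \Gamma(\ker F_*)$ and start from $RW = \phi_R W + \omega_R W$ (equation \eqref{eq08}). Applying $\nabla_V$ and using parallelism, the left-hand side becomes $R\nabla_V W = R(\widehat{\nabla}_V W + \mathcal{T}_V W)$, which I expand via \eqref{eq08} and \eqref{eq8} into
\begin{equation*}
\phi_R\widehat{\nabla}_V W + \omega_R\widehat{\nabla}_V W + B_R\mathcal{T}_V W + C_R\mathcal{T}_V W.
\end{equation*}
The right-hand side is $\nabla_V(\phi_R W) + \nabla_V(\omega_R W)$; since $\phi_R W$ is vertical and $\omega_R W$ is horizontal, this equals $\widehat{\nabla}_V\phi_R W + \mathcal{T}_V\phi_R W + \mathcal{T}_V\omega_R W + \mathcal{H}\nabla_V\omega_R W$ by the definitions \eqref{eq3}--\eqref{eq4} and the definition $\widehat{\nabla}_V W = \mathcal{V}\nabla_V W$. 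Matching vertical and horizontal pieces gives the two formulas of (1).

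Parts (2) and (3) follow the same template, with $\nabla_X$ in place of $\nabla_V$ and with the horizontal O'Neill tensor $\mathcal{A}$ replacing $\mathcal{T}$. For (2), I would start from $RY = B_R Y + C_R Y$ with $X,Y\in \Gamma((\ker F_*)^{\perp})$, expand $\nabla_X(RY) = R\nabla_X Y = R(\mathcal{A}_X Y + \mathcal{H}\nabla_X Y)$ via \eqref{eq08}--\eqref{eq8}, and use \eqref{eq3} to write $\nabla_X(B_R Y) = \mathcal{V}\nabla_X B_R Y + \mathcal{A}_X B_R Y$ and $\nabla_X(C_R Y) = \mathcal{A}_X C_R Y + \mathcal{H}\nabla_X C_R Y$. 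For (3), I would apply the same procedure to $RV = \phi_R V + \omega_R V$ for $V\in \Gamma(\ker F_*)$ and $X$ horizontal, keeping careful track of which summand is vertical and which is horizontal before invoking \eqref{eq3}.

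The only real pitfall is bookkeeping: one must remember that when $X$ is horizontal and $V$ is vertical, $\nabla_X V$ decomposes via $\mathcal{A}$, not via $\mathcal{T}$, whereas $\nabla_V X$ decomposes via $\mathcal{T}$; and one must correctly identify which of the summands $\phi_R\cdot,\omega_R\cdot,B_R\cdot,C_R\cdot$ lies in which subbundle before reading off the components. Once these conventions are handled consistently, each identity is an immediate consequence of equating vertical (respectively horizontal) parts of $R\nabla_E F = \nabla_E(RF)$, so no further machinery is required.
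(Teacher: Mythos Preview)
Your proposal is correct and is exactly the computation the paper has in mind: the paper does not write out a proof at all, merely prefacing the lemma with ``Then we easily obtain the following,'' and your method---using $\nabla R=0$ to write $\nabla_E(RF)=R\nabla_E F$, decomposing both sides via \eqref{eq08}, \eqref{eq8}, \eqref{eq3}, \eqref{eq4}, and equating vertical and horizontal parts---is precisely the routine verification being alluded to.
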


We define
\begin{align*}
  &(\nabla_V \phi_R)(W) := \widehat{\nabla}_V \phi_R W - \phi_R \widehat{\nabla}_V W     \\
  &(\nabla_V \omega_R)(W) := \mathcal{H} \nabla_V \omega_R W -
  \omega_R \widehat{\nabla}_V W
\end{align*}
for $V,W\in \Gamma(\ker F_*)$ and $R\in \{ I,J,K \}$.

By Lemma \ref{lem04}, we have

\begin{equation}\label{eq10}
(\nabla_V \phi_R)(W) = B_R \mathcal{T}_V W - \mathcal{T}_V \omega_R
W
\end{equation}
\begin{equation}\label{eq010}
(\nabla_V \omega_R)(W) = C_R \mathcal{T}_V W - \mathcal{T}_V \phi_R
W
\end{equation}

We call $\phi_R$ and $\omega_R$ {\em parallel} if $\nabla \phi_R =
0$ and $\nabla \omega_R = 0$, respectively.

\begin{proposition}
Let $(M,E,g_M)$ be an almost quaternionic Hermitian manifold and
$(N, g_N)$ a Riemannian manifold. Let $F : (M,E,g_M) \mapsto (N,
g_N)$ be an almost h-conformal slant submersion. Then we have
\begin{equation}\label{eq11}
\phi_R^2 V = -\cos^2 \theta_R V
\end{equation}
for $V\in \Gamma(\ker F_*)$ and $R\in \{ I,J,K \}$, where $(I,J,K)$
is an almost h-conformal slant basis with the slant angles $
\theta_I,\theta_J,\theta_K$.
\end{proposition}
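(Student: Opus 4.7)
The plan is to extract the identity from the definition of the slant angle combined with the orthogonal decomposition $RV = \phi_R V + \omega_R V$, then transfer the scalar identity to an operator identity via non-degeneracy on $\ker F_*$.

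First I would exploit the slant condition: for nonzero $V\in\Gamma(\ker F_*)$, the angle $\theta_R$ between $RV$ and $\ker(F_*)_p$ is by definition the angle between $RV$ and its orthogonal projection onto $\ker F_*$, which is $\phi_R V$. Since $|RV|=|V|$ (because $R$ is a Hermitian isometry on $(M,E,g_M)$), this gives
\begin{equation*}
\cos\theta_R \;=\; \frac{g_M(RV,\phi_R V)}{|RV|\,|\phi_R V|} \;=\; \frac{|\phi_R V|}{|V|},
\end{equation*}
using also that $g_M(RV,\phi_R V)=g_M(\phi_R V+\omega_R V,\phi_R V)=|\phi_R V|^2$ since $\omega_R V\perp \phi_R V$. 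Squaring yields the scalar identity $g_M(\phi_R V,\phi_R V)=\cos^2\theta_R\,g_M(V,V)$, and polarization (both sides are symmetric bilinear in $V,W$) upgrades this to
\begin{equation*}
g_M(\phi_R V,\phi_R W) \;=\; \cos^2\theta_R\,g_M(V,W) \quad \text{for all } V,W\in\Gamma(\ker F_*).
\end{equation*}

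Next I would compute $g_M(\phi_R^2 V,W)$ for $W\in\Gamma(\ker F_*)$. Using \eqref{eq08}, $\phi_R^2 V = R\phi_R V - \omega_R\phi_R V$, and since $\omega_R\phi_R V\in\Gamma((\ker F_*)^\perp)$ while $W\in\Gamma(\ker F_*)$, only the first term contributes. Then the compatibility $g_M(RX,Y)+g_M(X,RY)=0$ (a direct consequence of $R^2=-\mathrm{id}$ together with \eqref{hypermet}) gives
\begin{equation*}
g_M(\phi_R^2 V,W) \;=\; g_M(R\phi_R V,W) \;=\; -g_M(\phi_R V,RW) \;=\; -g_M(\phi_R V,\phi_R W),
\end{equation*}
again discarding the $\omega_R W$ component since $\phi_R V\in\Gamma(\ker F_*)$.

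Combining the two displays yields $g_M(\phi_R^2 V+\cos^2\theta_R\,V,W)=0$ for every $W\in\Gamma(\ker F_*)$. Since $\phi_R^2 V+\cos^2\theta_R V$ itself lies in $\Gamma(\ker F_*)$, non-degeneracy of $g_M$ on the vertical distribution forces $\phi_R^2 V = -\cos^2\theta_R\,V$, proving \eqref{eq11}. No step is a serious obstacle; the only subtlety is confirming that $R$ is skew-adjoint with respect to $g_M$, which I will justify in one line from $R^2=-\mathrm{id}$ and $g_M(RX,RY)=g_M(X,Y)$.
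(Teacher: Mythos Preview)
Your proof is correct and follows essentially the same route as the paper's: both extract $\cos\theta_R = |\phi_R V|/|V|$ from the slant-angle condition, square to obtain $g_M(\phi_R V,\phi_R V)=\cos^2\theta_R\,g_M(V,V)$, polarize, and then use skew-adjointness (you of $R$, the paper of $\phi_R$) together with non-degeneracy on $\ker F_*$ to conclude. The only cosmetic differences are that the paper characterizes the angle via a supremum over vertical $W$ rather than via the projection, and applies the skew-adjointness step before polarizing rather than after; you should also note, as the paper does for $V=0$, the degenerate case $\phi_R V=0$ where your intermediate expression $g_M(RV,\phi_R V)/(|RV|\,|\phi_R V|)$ is undefined but the conclusion $|\phi_R V|/|V|=0=\cos\theta_R$ still holds.
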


\begin{proof}
If $V = 0$, then the result immediately follows.

Given a nonzero vector $V\in \ker (F_*)_p$, $p\in M$, we obtain
\begin{align*}
\cos \theta_R
  &= \sup_{W\in \ker (F_*)_p -\{ 0 \}} \frac{g_M(RV, W)}{|RV|\cdot |W|} = \sup_{W\in \ker (F_*)_p -\{ 0 \}} \frac{g_M(\phi_R V, W)}{|V|\cdot |W|}    \\
  &= \sup_{W\in \ker (F_*)_p -\{ 0 \}} \frac{g_M(\phi_R V, \frac{W}{|W|})}{|V|} = \frac{g_M(\phi_R V, \frac{\phi_R V}{|\phi_R V|})}{|V|}   \\
  &= \frac{|\phi_R V|}{|V|}
\end{align*}
so that
$$
\cos^2 \theta_R g_M(V, V) = g_M(\phi_R V, \phi_R V) = g_M(-\phi_R^2
V, V).
$$
By polarization, we get
$$
\cos^2 \theta_R g_M(V+W, V+W) = g_M(-\phi_R^2 (V+W), V+W)
$$
for $W\in \ker (F_*)_p$, which implies
$$
cos^2 \theta_R g_M(V, W) = g_M(-\phi_R^2 V, W).
$$
Hence,
$$
g_M(\phi_R^2 V + cos^2 \theta_R V, W) = 0,
$$
it shows the result.
\end{proof}

\begin{remark}
Let $F$ be an almost h-conformal slant submersion from an almost
quaternionic Hermitian manifold $(M,E,g_M)$ onto a Riemannian
manifold $(N,g_N)$. Then given an almost h-conformal slant basis
$(I,J,K)$ with the slant angles $\theta_I,\theta_J,\theta_K$, we
easily have
\begin{equation}\label{eq12}
g_M(\phi_R V, \phi_R W) = \cos^2 \theta_R g_M(V, W),
\end{equation}
\begin{equation}\label{eq13}
g_M(\omega_R V, \omega_R W) = \sin^2 \theta_R g_M(V, W),
\end{equation}
for $V,W\in \Gamma(\ker F_*)$ and $R\in \{ I,J,K \}$.
\end{remark}

\begin{lemma}
Let $(M,I,J,K,g_M)$ be a hyperk\"{a}hler manifold and $(N, g_N)$ a
Riemannian manifold. Let $F : (M,I,J,K,g_M) \mapsto (N, g_N)$ be an
almost h-conformal slant submersion with $(I,J,K)$ an almost
h-conformal slant basis. Assume that $\omega_R$ is parallel for
$R\in \{ I,J,K \}$. Then we get
\begin{equation}\label{eq14-1}
\mathcal{T}_{\phi_R V} \phi_R V = -\cos^2 \theta_R \mathcal{T}_V V
\end{equation}
for $V\in \Gamma(\ker F_*)$.
\end{lemma}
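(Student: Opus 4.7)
The plan is to exploit two facts that have just been set up: the parallelism of $\omega_R$, which by equation (\ref{eq010}) collapses to the identity $\mathcal{T}_V \phi_R W = C_R \mathcal{T}_V W$ for all vertical $V,W$, and the symmetry of $\mathcal{T}$ on vertical pairs given by Proposition \ref{prop2}. These two ingredients let us shuffle $\phi_R$ in and out of $\mathcal{T}$ and convert it into a $C_R$; iterating once turns $\mathcal{T}_{\phi_R V}\phi_R V$ into $C_R^2 \mathcal{T}_V V$.

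Concretely, I would proceed in three short steps. First, from $\omega_R$ parallel and (\ref{eq010}),
\begin{equation*}
\mathcal{T}_V \phi_R W = C_R \mathcal{T}_V W \qquad \text{for all } V,W\in \Gamma(\ker F_*).
\end{equation*}
Second, setting $W=V$ and applying Proposition \ref{prop2} yields $\mathcal{T}_{\phi_R V} V = \mathcal{T}_V \phi_R V = C_R \mathcal{T}_V V$; then applying the parallel identity again with $V$ replaced by $\phi_R V$ gives
\begin{equation*}
\mathcal{T}_{\phi_R V}\phi_R V \;=\; C_R\, \mathcal{T}_{\phi_R V} V \;=\; C_R^{2}\, \mathcal{T}_V V .
\end{equation*}
Third, I would identify $C_R^2 \mathcal{T}_V V$ with $-\cos^2\theta_R \,\mathcal{T}_V V$ by a second use of the same relation: applying $\mathcal{T}_V(\phi_R\,\cdot)=C_R\mathcal{T}_V(\cdot)$ twice gives $\mathcal{T}_V\phi_R^2 V = C_R^2 \mathcal{T}_V V$, and by (\ref{eq11}) together with the tensoriality of $\mathcal{T}$ in its second slot, $\mathcal{T}_V\phi_R^2 V = -\cos^2\theta_R \,\mathcal{T}_V V$.

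Combining the last two displays yields the claimed equality (\ref{eq14-1}). I do not anticipate a real obstacle: the only thing worth double-checking is that $\mathcal{T}$ is $C^\infty(M)$-linear in its second argument (so that the scalar $\cos^2\theta_R$ passes outside), which follows from the definition (\ref{eq4}) since $\mathcal{H}\mathcal{V}=\mathcal{V}\mathcal{H}=0$ kills the derivation term. No use of the hyperkähler hypothesis beyond what is already packaged into Lemma \ref{lem04} is needed, and the almost h-conformal slant basis $(I,J,K)$ enters only through equation (\ref{eq11}).
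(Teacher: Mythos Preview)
Your proof is correct and follows essentially the same approach as the paper: both use the identity $\mathcal{T}_V\phi_R W = C_R\mathcal{T}_V W$ from $(\nabla\omega_R)=0$, the symmetry $\mathcal{T}_{\phi_R V}V=\mathcal{T}_V\phi_R V$, and the slant relation $\phi_R^2 V=-\cos^2\theta_R\,V$. The only cosmetic differences are that the paper tests everything against an arbitrary horizontal $X$ via $g_M(X,\cdot)$ (unnecessarily, since the identities already hold at the level of vector fields) and passes directly from $C_R\mathcal{T}_V\phi_R V$ to $\mathcal{T}_V\phi_R^2 V$, whereas you make the small detour through $C_R^2\mathcal{T}_V V$ before reconnecting; neither change affects the argument.
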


\begin{proof}
Since $\omega_R$ is parallel for $R\in \{ I,J,K \}$, by
(\ref{eq010}), we have
\begin{equation}\label{eq015-1}
C_R \mathcal{T}_V W = \mathcal{T}_V \phi_R W
\end{equation}
for $V,W\in \Gamma(\ker F_*)$.

Given $V\in \Gamma(\ker F_*)$ and $X\in \Gamma((\ker F_*)^{\perp})$,
by using (\ref{eq015-1}) and (\ref{eq07}),  we obtain
\begin{align*}
g_M(X, \mathcal{T}_{\phi_R V} \phi_R V)
  &= g_M(X, C_R \mathcal{T}_{\phi_R V} V) = g_M(X, C_R \mathcal{T}_{V} \phi_R V)    \\
  &= g_M(X, \mathcal{T}_{V} \phi_R^2 V) = g_M(X, -\cos^2 \theta_R \mathcal{T}_{V}
  V),
\end{align*}
which implies the result.
\end{proof}

\begin{theorem}\label{thm1-2}
Let $(M,I,J,K,g_M)$ be a hyperk\"{a}hler manifold and $(N, g_N)$ a
Riemannian manifold. Let $F : (M,I,J,K,g_M) \mapsto (N, g_N)$ be an
almost h-conformal slant submersion with $(I,J,K)$ an almost
h-conformal slant basis. Assume that $\omega_R$ is parallel with the
slant angle $0\leq \theta_R < \frac{\pi}{2}$ for some $R\in \{ I,J,K
\}$. Then all the fibers of the map $F$ are minimal.
\end{theorem}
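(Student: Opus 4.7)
\medskip

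\textbf{Proof proposal.} The plan is to build an orthonormal frame of the vertical distribution whose elements pair up under $\phi_R$, and then invoke the identity \eqref{eq14-1} pairwise.

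First, since $0\le \theta_R < \pi/2$, we have $\cos\theta_R \ne 0$, so by \eqref{eq11} the tensor $J_R := \frac{1}{\cos\theta_R}\phi_R$ is a well-defined endomorphism of $\ker F_*$ satisfying $J_R^2 = -\operatorname{id}$. Moreover \eqref{eq12} shows that $g_M(J_R V, J_R W) = g_M(V,W)$ for $V,W \in \Gamma(\ker F_*)$, so $J_R$ is a metric-compatible almost complex structure on the vertical distribution. In particular, $\dim \ker F_* = 2n$ is even, and $J_R$ is skew-adjoint on $\ker F_*$, which implies $g_M(V, J_R V) = 0$ for every vertical $V$.

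Second, I would construct a local orthonormal frame of $\ker F_*$ of the form
\[
\{ e_1, J_R e_1, e_2, J_R e_2, \ldots, e_n, J_R e_n \}.
\]
This is the standard unitary-frame construction: start with any unit vertical $e_1$, set $J_R e_1$ (automatically unit and orthogonal to $e_1$ by the properties above), then pick a unit $e_2$ orthogonal to both; the vector $J_R e_2$ is automatically orthogonal to $e_1, J_R e_1, e_2$ by metric compatibility of $J_R$ together with $J_R^2 = -\operatorname{id}$; continue inductively. Smoothness is standard since $\phi_R$ is smooth.

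Third, compute the (unnormalized) mean curvature vector of the fibers in this frame. Using the previous lemma, namely $\mathcal{T}_{\phi_R V}\phi_R V = -\cos^2\theta_R \, \mathcal{T}_V V$, we get
\begin{align*}
\mathcal{T}_{J_R e_i} J_R e_i
  &= \frac{1}{\cos^2\theta_R}\, \mathcal{T}_{\phi_R e_i}\phi_R e_i
  = -\,\mathcal{T}_{e_i} e_i.
\end{align*}
Therefore
\[
\sum_{i=1}^{n}\bigl( \mathcal{T}_{e_i} e_i + \mathcal{T}_{J_R e_i} J_R e_i \bigr) = 0,
\]
which is exactly the statement that the mean curvature vector of each fiber vanishes, i.e.\ the fibers are minimal.

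The key step is really just finding the right orthonormal frame; once the identity \eqref{eq14-1} is in hand, the pairing argument is immediate. The only mild subtlety is the assumption $\theta_R < \pi/2$, which is precisely what is needed to divide by $\cos\theta_R$ and obtain the almost complex structure $J_R$ on $\ker F_*$ used to pair the basis vectors. If $\theta_R = \pi/2$ were allowed, $\phi_R$ would vanish and this pairing trick would collapse.
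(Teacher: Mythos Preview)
Your proposal is correct and follows essentially the same approach as the paper's own proof: the paper also sets $e_{2i} = \sec\theta_R\,\phi_R e_{2i-1}$ to build the adapted orthonormal frame and then applies \eqref{eq14-1} pairwise to obtain $\sum_i \mathcal{T}_{e_i} e_i = 0$. Your write-up is slightly more detailed in justifying why $J_R := \sec\theta_R\,\phi_R$ is a metric almost complex structure on $\ker F_*$ (hence why the paired frame exists), but the argument is the same.
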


\begin{proof}
We may assume that $\omega_I$ is parallel with the slant angle
$0\leq \theta_I < \frac{\pi}{2}$.

By (\ref{eq11}), we can choose a local orthonormal frame
$\{e_i\}_{i=1}^{2n}$  of $\ker F_*$ such that $e_{2i} = \sec
\theta_I \phi_I e_{2i-1}$ for $1\leq i \leq n$.

Then by (\ref{eq14-1}), we have
\begin{align*}
2nH
  &= \sum_{i=1}^{2n} \mathcal{T}_{e_i} e_i    \\
  &= \sum_{i=1}^{n} (\mathcal{T}_{e_{2i-1}} e_{2i-1} + \mathcal{T}_{\sec \theta_I \phi_I e_{2i-1}} \sec \theta_I \phi_I e_{2i-1})    \\
  &= \sum_{i=1}^{n} (\mathcal{T}_{e_{2i-1}} e_{2i-1} + \sec^2 \theta_I\cdot (-\cos^2 \theta_I) \mathcal{T}_{e_{2i-1}} e_{2i-1})    \\
  &= 0.
\end{align*}
Therefore, the result follows.
\end{proof}

Now, we study the integrability of distributions and the geometry of
foliations.

\begin{theorem}\label{int1}
Let $(M,I,J,K,g_M)$ be a hyperk\"{a}hler manifold and $(N, g_N)$ a
Riemannian manifold. Let $F : (M,I,J,K,g_M) \mapsto (N, g_N)$ be an
almost h-conformal slant submersion with $(I,J,K)$ an almost
h-conformal slant basis. Then the following conditions are
equivalent:

(a) the distribution $(\ker F_*)^{\perp}$ is integrable.

(b)
\begin{align*}
  &\lambda^{-2} g_N(\nabla_Y^F F_* C_I X - \nabla_X^F F_* C_I Y, F_* \omega_I V)  \\
  &= g_M(\mathcal{V}\nabla_X B_I Y + \mathcal{A}_X C_I Y - \mathcal{V}\nabla_Y B_I X - \mathcal{A}_Y C_I X, \phi_I V)   \\
  &+ g_M(\mathcal{A}_X B_I Y - \mathcal{A}_Y B_I X - X(\ln \lambda)C_I Y + Y(\ln \lambda)C_I X - C_I Y(\ln \lambda)X \\
  &+ C_I X(\ln \lambda)Y + 2g_M(X, C_I Y)(\nabla \ln \lambda), \omega_I V)
\end{align*}
for $X,Y\in \Gamma((\ker F_*)^{\perp})$ and $V\in \Gamma(\ker F_*)$.

(c)
\begin{align*}
  &\lambda^{-2} g_N(\nabla_Y^F F_* C_J X - \nabla_X^F F_* C_J Y, F_* \omega_J V)  \\
  &= g_M(\mathcal{V}\nabla_X B_J Y + \mathcal{A}_X C_J Y - \mathcal{V}\nabla_Y B_J X - \mathcal{A}_Y C_J X, \phi_J V)   \\
  &+ g_M(\mathcal{A}_X B_J Y - \mathcal{A}_Y B_J X - X(\ln \lambda)C_J Y + Y(\ln \lambda)C_J X - C_J Y(\ln \lambda)X \\
  &+ C_J X(\ln \lambda)Y + 2g_M(X, C_J Y)(\nabla \ln \lambda), \omega_J V)
\end{align*}
for $X,Y\in \Gamma((\ker F_*)^{\perp})$ and $V\in \Gamma(\ker F_*)$.

(d)
\begin{align*}
  &\lambda^{-2} g_N(\nabla_Y^F F_* C_K X - \nabla_X^F F_* C_K Y, F_* \omega_K V)  \\
  &= g_M(\mathcal{V}\nabla_X B_K Y + \mathcal{A}_X C_K Y - \mathcal{V}\nabla_Y B_K X - \mathcal{A}_Y C_K X, \phi_K V)   \\
  &+ g_M(\mathcal{A}_X B_K Y - \mathcal{A}_Y B_K X - X(\ln \lambda)C_K Y + Y(\ln \lambda)C_K X - C_K Y(\ln \lambda)X \\
  &+ C_K X(\ln \lambda)Y + 2g_M(X, C_K Y)(\nabla \ln \lambda), \omega_K V)
\end{align*}
for $X,Y\in \Gamma((\ker F_*)^{\perp})$ and $V\in \Gamma(\ker F_*)$.
\end{theorem}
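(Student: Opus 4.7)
The plan is to reduce each of (b), (c), (d) to the equivalent assertion that $g_M([X,Y],V)=0$ for every $V\in\Gamma(\ker F_*)$, which is the integrability of $(\ker F_*)^{\perp}$. The three conditions correspond to the same computation performed with $R=I$, $R=J$ and $R=K$ respectively, so I will describe only $(\mathrm{a})\Leftrightarrow(\mathrm{b})$ and note that (c) and (d) follow by the same template.

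I would begin from the hyperk\"ahler hypothesis, which says that each $R\in\{I,J,K\}$ is parallel with respect to $\nabla$. Combined with $R^{2}=-\mathrm{id}$ and the fact that $R$ is an isometry of $TM$, this yields
\[
g_M(\nabla_X Y,V)=g_M(\nabla_X RY,RV).
\]
I would then expand $RY=B_R Y+C_R Y$ and $RV=\phi_R V+\omega_R V$ via \eqref{eq08}--\eqref{eq8} and distribute, obtaining four pairings. Three of them are direct: because $B_R Y,\phi_R V\in\Gamma(\ker F_*)$ while $C_R Y,\omega_R V\in\Gamma((\ker F_*)^{\perp})$, the definitions \eqref{eq3}--\eqref{eq4} of $\mathcal{A}$ and $\mathcal{T}$ collapse them to $g_M(\mathcal{V}\nabla_X B_R Y,\phi_R V)$, $g_M(\mathcal{A}_X B_R Y,\omega_R V)$ and $g_M(\mathcal{A}_X C_R Y,\phi_R V)$ respectively.

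The fourth, purely horizontal pairing $g_M(\mathcal{H}\nabla_X C_R Y,\omega_R V)$ is where the dilation $\lambda$ and the second fundamental form of $F$ must enter. Horizontal conformality turns it into $\lambda^{-2}g_N(F_*\mathcal{H}\nabla_X C_R Y,F_*\omega_R V)$, and the defining relation of $(\nabla F_*)$ together with \eqref{eqn77} lets me replace $F_*\mathcal{H}\nabla_X C_R Y$ by $\nabla^{F}_X F_*C_R Y$ minus the three conformal correction terms $X(\ln\lambda)F_*C_R Y+C_R Y(\ln\lambda)F_*X-g_M(X,C_R Y)F_*(\nabla\ln\lambda)$. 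Subtracting the same expression with $X$ and $Y$ interchanged then reassembles $g_M([X,Y],V)$. The key simplification, and the place where I expect the main bookkeeping obstacle, is the identity $g_M(Y,C_R X)=-g_M(X,C_R Y)$, which follows from the $R$-skew-adjointness of $g_M$ together with $g_M(B_R X,Y)=0$ (since $B_R X$ is vertical and $Y$ is horizontal); antisymmetrizing the $g_M(X,C_R Y)$ contributions collapses them into the single coefficient $2$ multiplying $\nabla\ln\lambda$ in (b). Once the resulting identity is written out, $g_M([X,Y],V)=0$ for every vertical $V$ is equivalent to its validity for all $X,Y,V$, which is exactly (b); running the identical argument with $R=J$ or $R=K$ produces (c) and (d).
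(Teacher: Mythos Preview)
Your proposal is correct and follows essentially the same route as the paper: compute $g_M([X,Y],V)=g_M(\nabla_X RY-\nabla_Y RX,RV)$ using parallelism of $R$, decompose via $B_R,C_R,\phi_R,\omega_R$, rewrite the horizontal--horizontal pairing through $F_*$ and \eqref{eqn77}, and collect the two $g_M(\,\cdot\,,C_R\,\cdot\,)$ terms into the factor $2$ using the skew-symmetry of $C_R$. The paper carries out the antisymmetrization in $X,Y$ from the start rather than computing $g_M(\nabla_X Y,V)$ first and then subtracting, but this is purely a matter of presentation.
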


\begin{proof}
Given $X,Y\in \Gamma((\ker F_*)^{\perp})$, $V\in \Gamma(\ker F_*)$,
and $R\in \{ I,J,K \}$, by using (\ref{eqn77}), we get
\begin{align*}
  &g_M([X,Y], V)    \\
  &= g_M(\nabla_X RY - \nabla_Y RX, RV)    \\
  &= g_M(\mathcal{V}\nabla_X B_R Y + \mathcal{A}_X C_R Y - \mathcal{V}\nabla_Y B_R X - \mathcal{A}_Y C_R X, \phi_R V)    \\
  &+ g_M(\mathcal{A}_X B_R Y + \mathcal{H}\nabla_X C_R Y - \mathcal{A}_Y B_R X - \mathcal{H}\nabla_Y C_R X, \omega_R V)   \\
  &= g_M(\mathcal{V}\nabla_X B_R Y + \mathcal{A}_X C_R Y - \mathcal{V}\nabla_Y B_R X - \mathcal{A}_Y C_R X, \phi_R V)    \\
  &+ g_M(\mathcal{A}_X B_R Y - \mathcal{A}_Y B_R X, \omega_R V) + \lambda^{-2} g_N(F_* \nabla_X C_R Y - F_* \nabla_Y C_R X, F_* \omega_R V)   \\
  &= g_M(\mathcal{V}\nabla_X B_R Y + \mathcal{A}_X C_R Y - \mathcal{V}\nabla_Y B_R X - \mathcal{A}_Y C_R X, \phi_R V)    \\
  &+ g_M(\mathcal{A}_X B_R Y - \mathcal{A}_Y B_R X, \omega_R V)  \\
  &+ \lambda^{-2} g_N(-(\nabla F_*)(X,C_R Y) + \nabla_X^F F_* C_R Y + (\nabla F_*)(Y,C_R X) - \nabla_Y^F F_* C_R X, F_* \omega_R V)   \\
  &= g_M(\mathcal{V}\nabla_X B_R Y + \mathcal{A}_X C_R Y - \mathcal{V}\nabla_Y B_R X - \mathcal{A}_Y C_R X, \phi_R V)    \\
  &+ g_M(\mathcal{A}_X B_R Y - \mathcal{A}_Y B_R X, \omega_R V) + \lambda^{-2} g_N(\nabla_X^F F_* C_R Y - \nabla_Y^F F_* C_R X, F_* \omega_R V) \\
  &+ \lambda^{-2} g_N(-X(\ln \lambda)F_* C_R Y - C_R Y(\ln \lambda)F_* X + g_M(X, C_R Y)F_* (\nabla \ln \lambda)  \\
  &+ Y(\ln \lambda)F_* C_R X + C_R X(\ln \lambda)F_* Y - g_M(Y, C_R X)F_* (\nabla \ln \lambda), F_* \omega_R V)   \\
  &= g_M(\mathcal{V}\nabla_X B_R Y + \mathcal{A}_X C_R Y - \mathcal{V}\nabla_Y B_R X - \mathcal{A}_Y C_R X, \phi_R V)    \\
  &+ g_M(\mathcal{A}_X B_R Y - \mathcal{A}_Y B_R X - X(\ln \lambda)C_R Y + Y(\ln \lambda)C_R X - C_R Y(\ln \lambda)X  \\
  &+ C_R X(\ln \lambda)Y + 2g_M(X, C_R Y)(\nabla \ln \lambda), \omega_R V)   \\
  &+ \lambda^{-2} g_N(\nabla_X^F F_* C_R Y - \nabla_Y^F F_* C_R X, F_* \omega_R V).
\end{align*}
Hence, $(a) \Leftrightarrow (b)$, $(a) \Leftrightarrow (c)$, $(a)
\Leftrightarrow (d)$.

Therefore, the result follows.
\end{proof}

We deal with the condition for an almost h-conformal slant
submersion to be horizontally homothetic.

\begin{theorem}\label{int2}
Let $(M,I,J,K,g_M)$ be a hyperk\"{a}hler manifold and $(N, g_N)$ a
Riemannian manifold. Let $F : (M,I,J,K,g_M) \mapsto (N, g_N)$ be an
almost h-conformal slant submersion with $(I,J,K)$ an almost
h-conformal slant basis. Assume that the distribution $(\ker
F_*)^{\perp}$ is integrable. Then the following conditions are
equivalent:

(a) the map $F$ is horizontally homothetic.

(b)
\begin{align*}
  &\lambda^{-2} g_N (\nabla_Y^F F_* C_I X - \nabla_X^F F_* C_I Y, F_* \omega_I V)  \\
  &= g_M(\mathcal{V}\nabla_X B_I Y + \mathcal{A}_X C_I Y - \mathcal{V}\nabla_Y B_I X - \mathcal{A}_Y C_I X, \phi_I V)   \\
  &+ g_M (\mathcal{A}_X B_I Y - \mathcal{A}_Y B_I X, \omega_I V)
\end{align*}
for $X,Y\in \Gamma((\ker F_*)^{\perp})$ and $V\in \Gamma(\ker F_*)$.

(c)
\begin{align*}
  &\lambda^{-2} g_N (\nabla_Y^F F_* C_J X - \nabla_X^F F_* C_J Y, F_* \omega_J V)  \\
  &= g_M(\mathcal{V}\nabla_X B_J Y + \mathcal{A}_X C_J Y - \mathcal{V}\nabla_Y B_J X - \mathcal{A}_Y C_J X, \phi_J V)   \\
  &+ g_M (\mathcal{A}_X B_J Y - \mathcal{A}_Y B_J X, \omega_J V)
\end{align*}
for $X,Y\in \Gamma((\ker F_*)^{\perp})$ and $V\in \Gamma(\ker F_*)$.

(d)
\begin{align*}
  &\lambda^{-2} g_N (\nabla_Y^F F_* C_K X - \nabla_X^F F_* C_K Y, F_* \omega_K V)  \\
  &= g_M(\mathcal{V}\nabla_X B_K Y + \mathcal{A}_X C_K Y - \mathcal{V}\nabla_Y B_K X - \mathcal{A}_Y C_K X, \phi_K V)   \\
  &+ g_M (\mathcal{A}_X B_K Y - \mathcal{A}_Y B_K X, \omega_K V)
\end{align*}
for $X,Y\in \Gamma((\ker F_*)^{\perp})$ and $V\in \Gamma(\ker F_*)$.
\end{theorem}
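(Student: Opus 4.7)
The plan is to exploit the fact that, for each $R\in\{I,J,K\}$, conditions (b), (c), (d) of the present theorem are obtained from the corresponding conditions of Theorem~\ref{int1} by deleting exactly the five $\ln\lambda$-dependent summands
\begin{align*}
\Delta_R(X,Y) :=&\, -X(\ln\lambda)C_R Y + Y(\ln\lambda)C_R X \\
&\, - C_R Y(\ln\lambda)X + C_R X(\ln\lambda)Y + 2g_M(X,C_R Y)\nabla\ln\lambda.
\end{align*}
Since $(\ker F_*)^{\perp}$ is integrable by hypothesis, Theorem~\ref{int1} already holds; subtracting the two identities therefore shows that (b)/(c)/(d) of the present theorem is equivalent to
\begin{equation*}
g_M\bigl(\Delta_R(X,Y),\omega_R V\bigr)=0
\end{equation*}
for every $X,Y\in\Gamma((\ker F_*)^{\perp})$ and $V\in\Gamma(\ker F_*)$. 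This reformulation is the pivot for the whole argument.

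The implication (a)$\Rightarrow$(b),(c),(d) would follow immediately: horizontal homothety amounts to $X(\ln\lambda)=0$ for every $X\in\Gamma((\ker F_*)^{\perp})$, so each of $X(\ln\lambda)$, $Y(\ln\lambda)$, $C_R X(\ln\lambda)$, $C_R Y(\ln\lambda)$ vanishes, and $g_M(\nabla\ln\lambda,\omega_R V)=\omega_R V(\ln\lambda)=0$ since $\omega_R V$ is horizontal; hence every summand of $g_M(\Delta_R(X,Y),\omega_R V)$ is zero.

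For the converse I would prove (b)$\Rightarrow$(a); the arguments for (c)$\Rightarrow$(a) and (d)$\Rightarrow$(a) are identical with $J$ or $K$ replacing $I$. Using the orthogonal decomposition \eqref{eq09}, the $I$-invariance of $\mu^I$, and the skew-symmetry of $\phi_I$ on $\ker F_*$ and of $C_I$ on $(\ker F_*)^{\perp}$ (both inherited from $I^2=-\mathrm{id}$ and \eqref{hypermet}), I would test the reformulated identity on two families of inputs. First, for $X,Y\in\Gamma(\mu^I)$ one has $C_I X=IX$ and $C_I Y=IY$ in $\mu^I$, so the first four summands of $g_M(\Delta_I(X,Y),\omega_I V)$ vanish by orthogonality $\mu^I\perp\omega_I(\ker F_*)$, leaving $2g_M(X,IY)\,\omega_I V(\ln\lambda)=0$; the choice $Y=IX$ with $X\neq 0$ then forces $\omega_I V(\ln\lambda)=0$ for every $V$. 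Second, taking $X\in\Gamma(\mu^I)$ and $Y=\omega_I W$, the previous step combined with \eqref{eq09-2} and \eqref{eq13} reduces the identity to
\begin{equation*}
\sin^2\theta_I\bigl[X(\ln\lambda)\,g_M(\phi_I W,V)+IX(\ln\lambda)\,g_M(W,V)\bigr]=0,
\end{equation*}
and specializing $V=W$ and $V=\phi_I W$ in turn (using $g_M(W,\phi_I W)=0$ and \eqref{eq11}) yields $X(\ln\lambda)=IX(\ln\lambda)=0$ for every $X\in\Gamma(\mu^I)$. Combined with the first step, $Z(\ln\lambda)=0$ for every horizontal $Z$, i.e., $F$ is horizontally homothetic.

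The main obstacle is the degenerate configurations where $\mu^I=\{0\}$ or $\theta_I\in\{0,\pi/2\}$, since one of the two test families above then collapses or carries no information. In such cases one would invoke the analogous condition (c) or (d) for a structure $R\in\{J,K\}$ with $\mu^R\neq\{0\}$ and $\theta_R\in(0,\pi/2)$; if no such $R$ exists, one polarizes $g_M(\Delta_I(X,Y),\omega_I V)=0$ on $X,Y\in\omega_I(\ker F_*)$ and, by choosing $V$ orthogonal to both $W$ and $\phi_I W$, still extracts $\omega_I V(\ln\lambda)=0$ provided $\dim\ker F_*\geq 3$.
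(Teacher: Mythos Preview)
Your proof is correct and follows essentially the same route as the paper's: both reduce the converse, via the identity established in Theorem~\ref{int1}, to the vanishing of $g_M(\Delta_R(X,Y),\omega_R V)$ and then probe it first with $X,Y\in\Gamma(\mu^R)$ (choosing one to be $R$ of the other) and second with one vector from $\mu^R$ and one from $\omega_R(\ker F_*)$. The only cosmetic difference is that in the second step the paper sets $X=\omega_R V$ with the \emph{same} $V$, which gives $RY(\ln\lambda)\,g_M(\omega_R V,\omega_R V)=0$ in one line without your auxiliary specializations $V=W$ and $V=\phi_I W$; your explicit flagging of the degenerate cases $\mu^R=\{0\}$ and $\theta_R=0$ is a point the paper leaves implicit.
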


\begin{proof}
Given $X,Y\in \Gamma((\ker F_*)^{\perp})$, $V\in \Gamma(\ker F_*)$,
and $R\in \{ I,J,K \}$, by the assumption, we obtain
\begin{eqnarray}
  &&0 = g_M ([X,Y], V) \label{eq112} \\
  &&=g_M(\mathcal{V}\nabla_X B_R Y + \mathcal{A}_X C_R Y - \mathcal{V}\nabla_Y B_R X - \mathcal{A}_Y C_R X, \phi_R V)  \nonumber   \\
  &&+ g_M(\mathcal{A}_X B_R Y - \mathcal{A}_Y B_R X - X(\ln \lambda)C_R Y + Y(\ln \lambda)C_R X - C_R Y(\ln \lambda)X  \nonumber \\
  &&+ C_R X(\ln \lambda)Y + 2g_M(X, C_R Y)(\nabla \ln \lambda), \omega_R V) \nonumber \\
  &&+ \lambda^{-2} g_N(\nabla_X^F F_* C_R Y - \nabla_Y^F F_* C_R X, F_* \omega_R V). \nonumber
\end{eqnarray}
Using (\ref{eq112}), we have $(a) \Rightarrow (b)$, $(a) \Rightarrow
(c)$, $(a) \Rightarrow (d)$.

Conversely, from (\ref{eq112}), we get
\begin{eqnarray}
  &&0 = g_M (- X(\ln \lambda)C_R Y + Y(\ln \lambda)C_R X - C_R Y(\ln \lambda)X + C_R X(\ln \lambda)Y \label{eq113} \\
  &&+ 2g_M(X, C_R Y)(\nabla \ln \lambda), \omega_R V). \nonumber
\end{eqnarray}
If $Y\in \Gamma(\mu^R)$, then by using (\ref{eq113}) and
(\ref{eq09}), we have
\begin{equation}\label{eq114}
0 = g_M (Y(\ln \lambda)C_R X - R Y(\ln \lambda)X + 2g_M(X, R
Y)(\nabla \ln \lambda), \omega_R V).
\end{equation}
Applying $X=RY$ at (\ref{eq114}), we get
\begin{eqnarray}
  &&0 = g_M (Y(\ln \lambda)R^2 Y - R Y(\ln \lambda)RY + 2g_M(RY, R
Y)(\nabla \ln \lambda), \omega_R V) \label{eq115} \\
  &&= 2g_M(Y, Y) g_M(\nabla \ln \lambda, \omega_R V), \nonumber
\end{eqnarray}
which implies
\begin{equation}\label{eq116}
g_M (\nabla \lambda, \omega_R V) = 0 \quad \text{for} \ V\in
\Gamma(\ker F_*).
\end{equation}
Applying $X = \omega_R V$ at (\ref{eq114}), we obtain
\begin{align*}
  0
  &= g_M (Y(\ln \lambda)C_R \omega_R V - R Y(\ln \lambda)\omega_R V, \omega_R V)   \\
  &= - R Y(\ln \lambda) g_M (\omega_R V, \omega_R V),
\end{align*}
which means
\begin{equation}\label{eq117}
g_M (\nabla \lambda, Z) = 0 \quad \text{for} \ Z\in \Gamma(\mu^R).
\end{equation}
By (\ref{eq116}) and (\ref{eq117}), we have $(b) \Rightarrow (a)$,
$(c) \Rightarrow (a)$, $(d) \Rightarrow (a)$.

Therefore, the result follows.
\end{proof}

\begin{theorem}\label{thm12}
Let $(M,I,J,K,g_M)$ be a hyperk\"{a}hler manifold and $(N, g_N)$ a
Riemannian manifold. Let $F : (M,I,J,K,g_M) \mapsto (N, g_N)$ be an
almost h-conformal slant submersion with $(I,J,K)$ an almost
h-conformal slant basis. Then the following conditions are
equivalent:

(a) the distribution $(\ker F_*)^{\perp}$ defines a totally geodesic
foliation on $M$.

(b)
\begin{align*}
&\lambda^{-2}g_N(\nabla_X^F F_* Y, F_* \omega_I \phi_I V) -
\lambda^{-2}g_N(\nabla_X^F F_* C_I Y, F_* \omega_I V) \\
&= g_M(\mathcal{A}_X B_I Y, \omega_I V)  \\
&+ g_M(-X(\ln \lambda) C_I Y - C_I Y(\ln \lambda) X + g_M(X, C_I Y)
(\nabla \ln \lambda), \omega_I V) \\
&- g_M(-X(\ln \lambda) Y - Y(\ln \lambda) X + g_M(X, Y) (\nabla \ln
\lambda), \omega_I \phi_I V)
\end{align*}
for $X,Y\in \Gamma((\ker F_*)^{\perp})$ and $V\in \Gamma(\ker F_*)$.

(c)
\begin{align*}
&\lambda^{-2}g_N(\nabla_X^F F_* Y, F_* \omega_J \phi_J V) -
\lambda^{-2}g_N(\nabla_X^F F_* C_J Y, F_* \omega_J V) \\
&= g_M(\mathcal{A}_X B_J Y, \omega_J V)  \\
&+ g_M(-X(\ln \lambda) C_J Y - C_J Y(\ln \lambda) X + g_M(X, C_J Y)
(\nabla \ln \lambda), \omega_J V) \\
&- g_M(-X(\ln \lambda) Y - Y(\ln \lambda) X + g_M(X, Y) (\nabla \ln
\lambda), \omega_J \phi_J V)
\end{align*}
for $X,Y\in \Gamma((\ker F_*)^{\perp})$ and $V\in \Gamma(\ker F_*)$.

(d)
\begin{align*}
&\lambda^{-2}g_N(\nabla_X^F F_* Y, F_* \omega_K \phi_K V) -
\lambda^{-2}g_N(\nabla_X^F F_* C_K Y, F_* \omega_K V) \\
&= g_M(\mathcal{A}_X B_K Y, \omega_K V)  \\
&+ g_M(-X(\ln \lambda) C_K Y - C_K Y(\ln \lambda) X + g_M(X, C_K Y)
(\nabla \ln \lambda), \omega_K V) \\
&- g_M(-X(\ln \lambda) Y - Y(\ln \lambda) X + g_M(X, Y) (\nabla \ln
\lambda), \omega_K \phi_K V)
\end{align*}
for $X,Y\in \Gamma((\ker F_*)^{\perp})$ and $V\in \Gamma(\ker F_*)$.
\end{theorem}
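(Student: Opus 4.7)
The strategy is to reduce each of the equivalences $(a)\Leftrightarrow(b)$, $(a)\Leftrightarrow(c)$, $(a)\Leftrightarrow(d)$ to a single algebraic identity, and then observe that the three cases are formally identical. Recall that $(\ker F_*)^{\perp}$ defines a totally geodesic foliation precisely when $g_M(\nabla_X Y, V) = 0$ for all $X,Y\in\Gamma((\ker F_*)^{\perp})$ and $V\in\Gamma(\ker F_*)$, so condition (a) is exactly this vanishing. We describe the argument for $R=I$; the cases $R=J,K$ are word-for-word the same.

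Since $M$ is hyperk\"ahler, $I$ is parallel and $g_M$-skew-adjoint, so $\nabla_X IY = I\nabla_X Y$ and
\[
g_M(\nabla_X IY, \omega_I V) = -g_M(\nabla_X Y, I\omega_I V).
\]
Expanding $IY = B_I Y + C_I Y$ and noting that in the O'Neill decomposition of $\nabla_X(B_I Y)$ only the horizontal component $\mathcal{A}_X B_I Y$ pairs nontrivially with the horizontal vector $\omega_I V$, while on the right substituting
\[
I\omega_I V = B_I\omega_I V + C_I\omega_I V = -\sin^2\theta_I\, V - \omega_I\phi_I V
\]
(the formula displayed between \eqref{eq09-4} and Lemma \ref{lem04}, which follows from \eqref{eq09-1}, \eqref{eq09-2}, and \eqref{eq11}), yields the intermediate identity
\[
g_M(\mathcal{A}_X B_I Y,\omega_I V) + g_M(\nabla_X C_I Y,\omega_I V) = \sin^2\theta_I\, g_M(\nabla_X Y, V) + g_M(\nabla_X Y,\omega_I\phi_I V).
\]

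The two remaining $g_M$-products involve $\nabla_X$ of a horizontal vector paired with a horizontal vector, so formula \eqref{eqn77}---together with the horizontal conformality $\lambda^{-2} g_N(F_*\cdot, F_*\cdot) = g_M(\cdot,\cdot)$ on $(\ker F_*)^{\perp}$---rewrites each as a pullback-connection term of the form $\lambda^{-2} g_N(\nabla^F_X F_*\,\cdot, F_*\cdot)$ plus exactly the three $\ln\lambda$-correction terms appearing in condition (b). Substituting these expansions and isolating the pullback-connection terms on the left yields an identity of the schematic shape
\[
[\text{LHS of (b)}] + \sin^2\theta_I\, g_M(\nabla_X Y, V) = [\text{RHS of (b)}].
\]
Hence (a) immediately gives (b), while conversely (b) forces $\sin^2\theta_I\, g_M(\nabla_X Y, V) = 0$ for all admissible $X,Y,V$, which forces (a) whenever $\sin\theta_I \ne 0$ (the degenerate case $\sin\theta_I = 0$ collapses both sides of (b) to $0$ simultaneously). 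The equivalences with (c) and (d) are obtained by running the same computation with $J$ and $K$ in place of $I$.

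The main obstacle is not conceptual but purely bookkeeping: one must (i) correctly identify, at each occurrence of $\nabla_X(\cdot)$, which O'Neill component of the Levi-Civita connection survives against a given vertical or horizontal test vector; (ii) track the signs coming from the skew-adjointness of $I$ and from the four identities \eqref{eq09-1}--\eqref{eq09-4}; and (iii) transcribe the three $\ln\lambda$-corrections from \eqref{eqn77} applied to two different pairs of horizontal vectors in parallel without mixing them up.
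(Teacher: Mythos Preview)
Your proposal is correct and follows essentially the same route as the paper. The only cosmetic difference is the entry point: you contract $\nabla_X IY$ against the horizontal vector $\omega_I V$ and invoke the identity $I\omega_I V = -\sin^2\theta_I\, V - \omega_I\phi_I V$, whereas the paper starts from $g_M(\nabla_X Y, V) = g_M(\nabla_X RY, RV)$ and splits $RV = \phi_R V + \omega_R V$; both reach the same intermediate identity $\sin^2\theta_R\, g_M(\nabla_X Y, V) = -g_M(\nabla_X Y,\omega_R\phi_R V) + g_M(\mathcal{A}_X B_R Y,\omega_R V) + g_M(\mathcal{H}\nabla_X C_R Y,\omega_R V)$ and then expand the two horizontal--horizontal pairings via \eqref{eqn77} exactly as you outline (the paper, like you, does not separately treat the degenerate case $\sin\theta_R = 0$).
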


\begin{proof}
Given $X,Y\in \Gamma((\ker F_*)^{\perp})$, $V\in \Gamma(\ker F_*)$,
and $R\in \{ I,J,K \}$, we obtain
\begin{align*}
  &g_M (\nabla_X Y, V)  \\
  &= g_M (\nabla_X RY, RV)   \\
  &= g_M (\nabla_X RY, \phi_R V + \omega_R V)   \\
  &= \cos^2 \theta_R \cdot g_M(\nabla_X Y, V) - g_M(\nabla_X Y, \omega_R
  \phi_R V) + g_M(\mathcal{A}_X B_R Y + \mathcal{H}\nabla_X C_R Y, \omega_R V)
\end{align*}
so that
\begin{align*}
  &\sin^2 \theta_R \cdot g_M (\nabla_X Y, V)    \\
  &= - g_M(\nabla_X Y, \omega_R \phi_R V) + g_M(\mathcal{A}_X B_R Y, \omega_R V) + \lambda^{-2}g_N(F_* \nabla_X C_R Y, F_* \omega_R V)  \\
  &= - g_M(\nabla_X Y, \omega_R \phi_R V) + g_M(\mathcal{A}_X B_R Y, \omega_R V)   \\
  &+ \lambda^{-2}g_N(-(\nabla F_*)(X,C_R Y) + \nabla_X^F F_* C_R Y, F_* \omega_R V) \\
  &= - g_M(\nabla_X Y, \omega_R \phi_R V) + g_M(\mathcal{A}_X B_R Y, \omega_R V) + \lambda^{-2}g_N(\nabla_X^F F_* C_R Y, F_* \omega_R V)  \\
  &+ \lambda^{-2}g_N(-X(\ln \lambda) F_* C_R Y - C_R Y(\ln \lambda) F_* X + g_M(X, C_R Y) F_* (\nabla \ln \lambda), F_* \omega_R V) \\
  &= - g_M(\nabla_X Y, \omega_R \phi_R V) + g_M(\mathcal{A}_X B_R Y, \omega_R V) + \lambda^{-2}g_N(\nabla_X^F F_* C_R Y, F_* \omega_R V)  \\
  &+ g_M(-X(\ln \lambda) C_R Y - C_R Y(\ln \lambda) X + g_M(X, C_R Y) (\nabla \ln \lambda), \omega_R V). \\
\end{align*}
But
\begin{align*}
  &g_M(\nabla_X Y, \omega_R \phi_R V)    \\
  &= \lambda^{-2}g_N(F_* \nabla_X Y, F_* \omega_R \phi_R V)   \\
  &= \lambda^{-2}g_N(-(\nabla F_*)(X,Y) + \nabla_X^F F_* Y, F_* \omega_R \phi_R V)   \\
  &= \lambda^{-2}g_N(-X(\ln \lambda) F_* Y - Y(\ln \lambda) F_* X + g_M(X, Y)F_* (\nabla \ln \lambda) \\
  &+ \nabla_X^F F_* Y, F_* \omega_R \phi_R V) \\
  &= g_M(-X(\ln \lambda) Y - Y(\ln \lambda) X + g_M(X, Y) (\nabla \ln \lambda), \omega_R \phi_R V)  \\
  &+ \lambda^{-2}g_N(\nabla_X^F F_* Y, F_* \omega_R \phi_R V).
\end{align*}
Hence, we get  $(a) \Leftrightarrow (b)$, $(a) \Leftrightarrow (c)$,
$(a) \Leftrightarrow (d)$.

Therefore, the result follows.
\end{proof}

\begin{theorem}\label{thm22}
Let $(M,I,J,K,g_M)$ be a hyperk\"{a}hler manifold and $(N, g_N)$ a
Riemannian manifold. Let $F : (M,I,J,K,g_M) \mapsto (N, g_N)$ be an
almost h-conformal slant submersion with $(I,J,K)$ an almost
h-conformal slant basis. Assume that the distribution $(\ker
F_*)^{\perp}$ defines a totally geodesic foliation on $M$. Then the
following conditions are equivalent:

(a) the map $F$ is horizontally homothetic.

(b)
\begin{align*}
&\lambda^{-2}g_N(\nabla_X^F F_* Y, F_* \omega_I \phi_I V) -
\lambda^{-2}g_N(\nabla_X^F F_* C_I Y, F_* \omega_I V) \\
&= g_M(\mathcal{A}_X B_I Y, \omega_I V)
\end{align*}
for $X,Y\in \Gamma((\ker F_*)^{\perp})$ and $V\in \Gamma(\ker F_*)$.

(c)
\begin{align*}
&\lambda^{-2}g_N(\nabla_X^F F_* Y, F_* \omega_J \phi_J V) -
\lambda^{-2}g_N(\nabla_X^F F_* C_J Y, F_* \omega_J V) \\
&= g_M(\mathcal{A}_X B_J Y, \omega_J V)
\end{align*}
for $X,Y\in \Gamma((\ker F_*)^{\perp})$ and $V\in \Gamma(\ker F_*)$.

(d)
\begin{align*}
&\lambda^{-2}g_N(\nabla_X^F F_* Y, F_* \omega_K \phi_K V) -
\lambda^{-2}g_N(\nabla_X^F F_* C_K Y, F_* \omega_K V) \\
&= g_M(\mathcal{A}_X B_K Y, \omega_K V)
\end{align*}
for $X,Y\in \Gamma((\ker F_*)^{\perp})$ and $V\in \Gamma(\ker F_*)$.
\end{theorem}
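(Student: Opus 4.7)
The plan is to reduce Theorem \ref{thm22} to an extraction problem from the $\nabla\ln\lambda$--terms appearing in Theorem \ref{thm12}. Under the standing hypothesis that $(\ker F_*)^{\perp}$ defines a totally geodesic foliation, condition (b) of Theorem \ref{thm12} holds, and its right-hand side differs from the right-hand side of condition (b) of Theorem \ref{thm22} only by a ``residual'' expression
\[
E_I(X,Y,V) := g_M\bigl(-X(\ln\lambda)C_IY - C_IY(\ln\lambda)X + g_M(X,C_IY)\,\nabla\ln\lambda,\ \omega_I V\bigr)
\]
\[
\quad -\, g_M\bigl(-X(\ln\lambda)Y - Y(\ln\lambda)X + g_M(X,Y)\,\nabla\ln\lambda,\ \omega_I\phi_I V\bigr),
\]
and analogously for $J,K$. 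Thus the theorem reduces to the equivalence: for each $R\in\{I,J,K\}$, the identity $E_R\equiv 0$ on all horizontal $X,Y$ and vertical $V$ is equivalent to $F$ being horizontally homothetic.

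The forward implication is immediate. Horizontally homothetic means $\nabla\ln\lambda\in\Gamma(\ker F_*)$, so every scalar of the form $Z(\ln\lambda)=g_M(\nabla\ln\lambda,Z)$ with $Z$ horizontal vanishes, and likewise $g_M(\nabla\ln\lambda,\omega_R V)=g_M(\nabla\ln\lambda,\omega_R\phi_R V)=0$. Each term of $E_R$ contains such a factor, hence $E_R\equiv 0$, giving (a)$\Rightarrow$(b),(c),(d).

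For the reverse direction I would mimic the extraction technique used in Theorem \ref{int2}. Assume $E_I\equiv 0$; the cases (c)$\Rightarrow$(a) and (d)$\Rightarrow$(a) are entirely parallel. First restrict to $Y\in\Gamma(\mu^I)$, which gives $B_IY=0$ and $C_IY=IY\in\mu^I$, so by the orthogonal decomposition \eqref{eq09} the pairings $g_M(IY,\omega_I V)$ and $g_M(Y,\omega_I\phi_I V)$ vanish. Substituting $X=IY$ then eliminates the remaining evaluation terms (using $g_M(IY,Y)=0$, an immediate consequence of $g_M(U,IU)=0$ on a hyperk\"ahler manifold) and collapses $E_I$ to $|Y|^2 g_M(\nabla\ln\lambda,\omega_I V)=0$, whence $\nabla\ln\lambda\perp\omega_I(\ker F_*)$. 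Next, substituting $X=\omega_I V$ and using $g_M(V,\phi_I V)=0$ together with the orthogonality just obtained, the equation $E_I=0$ reduces to $\sin^2\theta_I|V|^2\,IY(\ln\lambda)=0$, giving $IY(\ln\lambda)=0$ for every $Y\in\mu^I$; since $I$ preserves $\mu^I$, this yields $\nabla\ln\lambda\perp\mu^I$. Combining these two orthogonalities with the decomposition $(\ker F_*)^{\perp}=\omega_I(\ker F_*)\oplus\mu^I$ gives $\nabla\ln\lambda\in\Gamma(\ker F_*)$, i.e.\ $F$ is horizontally homothetic.

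The main obstacle is precisely the decoupling in $E_R$: it pairs $\nabla\ln\lambda$ simultaneously with the two distinct horizontal directions $\omega_R V$ and $\omega_R\phi_R V$ and mixes several evaluation-type scalars, so two successive well-chosen substitutions ($Y\in\mu^R$ followed by $X=RY$, and then $X=\omega_R V$) are needed in order to isolate the orthogonality of $\nabla\ln\lambda$ with each summand of $(\ker F_*)^{\perp}$. A minor degenerate situation occurs if $\theta_R=0$, in which case $\omega_R\equiv 0$ and condition (b)/(c)/(d) is vacuous; under the implicit nondegeneracy of the slant structure one simply works with an index $R'$ for which $\theta_{R'}>0$.
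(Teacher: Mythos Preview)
Your proposal is correct and follows essentially the same route as the paper's proof: both reduce the statement, via Theorem~\ref{thm12}, to the vanishing of the residual $E_R$, and both extract $\nabla\ln\lambda\perp\omega_R(\ker F_*)$ and $\nabla\ln\lambda\perp\mu^R$ by restricting to $Y\in\Gamma(\mu^R)$ and then successively substituting $X=RY$ and $X=\omega_R V$. Your observation about the degenerate case $\theta_R=0$ (where $\omega_R\equiv 0$ makes the condition vacuous) is a point the paper does not explicitly address.
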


\begin{proof}
Given $X,Y\in \Gamma((\ker F_*)^{\perp})$, $V\in \Gamma(\ker F_*)$,
and $R\in \{ I,J,K \}$, by Theorem {{\ref{thm12}}}, we obtain
\begin{eqnarray}
  &&\lambda^{-2}g_N(\nabla_X^F F_* Y, F_* \omega_R\phi_R V) -
\lambda^{-2}g_N(\nabla_X^F F_* C_R Y, F_* \omega_R V) \label{eq1-15} \\
  &&= g_M(\mathcal{A}_X B_R Y, \omega_R V) \nonumber  \\
  &&+ g_M(-X(\ln \lambda) C_R Y - C_R Y(\ln \lambda) X + g_M(X, C_R Y)
(\nabla \ln \lambda), \omega_R V) \nonumber \\
&&- g_M(-X(\ln \lambda) Y - Y(\ln \lambda) X + g_M(X, Y) (\nabla \ln
\lambda), \omega_R \phi_R V),  \nonumber
\end{eqnarray}
which implies $(a) \Rightarrow (b)$, $(a) \Rightarrow (c)$, $(a)
\Rightarrow (d)$.

Conversely, from (\ref{eq1-15}), we get
\begin{eqnarray}
&&0 = g_M(-X(\ln \lambda) C_R Y - C_R Y(\ln \lambda) X + g_M(X, C_R
Y) (\nabla \ln \lambda), \omega_R V)  \label{eq015}  \\
&&- g_M(-X(\ln \lambda) Y - Y(\ln \lambda) X + g_M(X, Y) (\nabla \ln
\lambda), \omega_R \phi_R V).  \nonumber
\end{eqnarray}

If $Y\in \Gamma(\mu^R)$, then from (\ref{eq015}), we have
\begin{eqnarray}
&&0 = g_M(- R Y(\ln \lambda) X + g_M(X, R
Y) (\nabla \ln \lambda), \omega_R V)  \label{eq015-2}  \\
&&- g_M(- Y(\ln \lambda) X + g_M(X, Y) (\nabla \ln \lambda),
\omega_R \phi_R V).  \nonumber
\end{eqnarray}
Applying $X = RY$ at (\ref{eq015-2}), we obtain
\begin{equation}\label{eq0151-3}
0 = g_M(RX, RY) g_M(\nabla \ln \lambda, \omega_R V),
\end{equation}
which implies
\begin{equation}\label{eq0151-4}
g_M(\nabla \lambda, \omega_R V) = 0 \quad \text{for} \ V\in
\Gamma(\ker F_*).
\end{equation}
Applying $X = \omega_R V$ at (\ref{eq015-2}), by using (\ref{eq09})
and (\ref{eq13}), we get
\begin{eqnarray}
&&0 = -g_M(\omega_R V, \omega_R V) \cdot g_M(RY, \nabla \ln \lambda) + g_M(Y, \nabla \ln \lambda) \cdot g_M(\omega_R V, \omega_R \phi_R V) \label{eq015-5}  \\
&&= -g_M(\omega_R V, \omega_R V) \cdot g_M(RY, \nabla \ln \lambda),
\nonumber
\end{eqnarray}
which implies
\begin{equation}\label{eq0151-6}
g_M(\nabla \lambda, Z) = 0 \quad \text{for} \ Z\in \Gamma(\mu^R).
\end{equation}
By (\ref{eq0151-4}) and (\ref{eq0151-6}), we have $(b) \Rightarrow
(a)$, $(c) \Rightarrow (a)$, $(d) \Rightarrow (a)$.

Therefore, the result follows.
\end{proof}

\begin{theorem}\label{thm002}
Let $(M,I,J,K,g_M)$ be a hyperk\"{a}hler manifold and $(N, g_N)$ a
Riemannian manifold. Let $F : (M,I,J,K,g_M) \mapsto (N, g_N)$ be an
almost h-conformal slant submersion with $(I,J,K)$ an almost
h-conformal slant basis. Then the following conditions are
equivalent:

(a) the distribution $\ker F_*$ defines a totally geodesic foliation
on $M$.

(b)
$$
g_M(\nabla_V \omega_I \phi_I W, X) = g_M(\mathcal{T}_V \omega_I W,
B_I X) + g_M(\mathcal{H} \nabla_V \omega_I W, C_I X)
$$
for $X\in \Gamma((\ker F_*)^{\perp})$ and $V,W\in \Gamma(\ker F_*)$.

(c)
$$
g_M(\nabla_V \omega_J \phi_J W, X) = g_M(\mathcal{T}_V \omega_J W,
B_J X) + g_M(\mathcal{H} \nabla_V \omega_J W, C_J X)
$$
for $X\in \Gamma((\ker F_*)^{\perp})$ and $V,W\in \Gamma(\ker F_*)$.

(d)
$$
g_M(\nabla_V \omega_K \phi_K W, X) = g_M(\mathcal{T}_V \omega_K W,
B_K X) + g_M(\mathcal{H} \nabla_V \omega_K W, C_K X)
$$
for $X\in \Gamma((\ker F_*)^{\perp})$ and $V,W\in \Gamma(\ker F_*)$.
\end{theorem}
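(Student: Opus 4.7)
The plan is to characterize $\ker F_*$ defining a totally geodesic foliation by the pointwise condition $g_M(\nabla_V W, X) = 0$ for all $V, W \in \Gamma(\ker F_*)$ and $X \in \Gamma((\ker F_*)^{\perp})$, and then rewrite this quantity using the hyperk\"ahler parallelism of $R$ twice. Fix $R \in \{I, J, K\}$. Since $R$ is parallel and $g_M$-orthogonal,
\begin{equation*}
g_M(\nabla_V W, X) = g_M(\nabla_V (RW), RX).
\end{equation*}
Expanding $RW = \phi_R W + \omega_R W$ via \eqref{eq08} and $RX = B_R X + C_R X$ via \eqref{eq8} produces two terms. The term $g_M(\nabla_V \omega_R W, RX)$ simplifies cleanly: using the definition \eqref{eq4} of $\mathcal{T}$, one has $\nabla_V \omega_R W = \mathcal{T}_V \omega_R W + \mathcal{H}\nabla_V \omega_R W$, and orthogonality of the vertical and horizontal distributions against $B_R X$ and $C_R X$ respectively gives
\begin{equation*}
g_M(\nabla_V \omega_R W, RX) = g_M(\mathcal{T}_V \omega_R W, B_R X) + g_M(\mathcal{H}\nabla_V \omega_R W, C_R X),
\end{equation*}
which is precisely the right-hand side of condition (b) (resp.\ (c), (d)).

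The remaining piece $g_M(\nabla_V \phi_R W, RX)$ still contains a derivative of a vertical vector field paired with a horizontal one, so I apply the same parallelism trick once more:
\begin{equation*}
g_M(\nabla_V \phi_R W, RX) = g_M(\nabla_V (R\phi_R W), R^2 X) = -g_M(\nabla_V (R\phi_R W), X).
\end{equation*}
Now $R\phi_R W = \phi_R^2 W + \omega_R \phi_R W$ by \eqref{eq08}, and \eqref{eq11} gives $\phi_R^2 W = -\cos^2 \theta_R\, W$, so this term equals $\cos^2 \theta_R\, g_M(\nabla_V W, X) - g_M(\nabla_V \omega_R \phi_R W, X)$. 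Substituting back and transposing the $\cos^2 \theta_R\, g_M(\nabla_V W, X)$ contribution yields
\begin{equation*}
\sin^2 \theta_R\, g_M(\nabla_V W, X) = -g_M(\nabla_V \omega_R \phi_R W, X) + g_M(\mathcal{T}_V \omega_R W, B_R X) + g_M(\mathcal{H}\nabla_V \omega_R W, C_R X).
\end{equation*}

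Under the standing assumption that the slant angle $\theta_R$ is nonzero (the non-degenerate regime implicit for the equivalence to be meaningful), the vanishing of the left-hand side for all admissible $V$, $W$, $X$ is equivalent to the identity in (b). Since the computation is uniform in $R$, the same argument establishes $(a) \Leftrightarrow (c)$ and $(a) \Leftrightarrow (d)$, giving the four-way equivalence. The only delicate step is the second application of parallelism: one must use $g_M(RA, RB) = g_M(A, B)$, $R^2 = -\mathrm{id}$, and the slant identity \eqref{eq11} in the correct order to produce the clean $\sin^2 \theta_R$ factor; all other manipulations are routine vertical/horizontal decomposition.
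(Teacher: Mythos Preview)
Your proof is correct and follows essentially the same route as the paper: both start from $g_M(\nabla_V W, X) = g_M(\nabla_V RW, RX)$, split $RW = \phi_R W + \omega_R W$, apply the parallelism of $R$ a second time to the $\phi_R W$ term together with \eqref{eq11}, and arrive at the identical identity $\sin^2\theta_R\, g_M(\nabla_V W, X) = -g_M(\nabla_V \omega_R\phi_R W, X) + g_M(\mathcal{T}_V \omega_R W, B_R X) + g_M(\mathcal{H}\nabla_V \omega_R W, C_R X)$. Your explicit remark that the equivalence requires $\theta_R \neq 0$ is a useful clarification that the paper leaves implicit.
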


\begin{proof}
Given $V,W\in \Gamma(\ker F_*)$, $X\in \Gamma((\ker F_*)^{\perp})$,
and $R\in \{ I,J,K \}$, we have
\begin{align*}
&g_M(\nabla_V W, X) \\
&= g_M(\nabla_V RW, RX)    \\
&= g_M(\nabla_V (\phi_R W + \omega_R W), RX)    \\
&= -g_M(\nabla_V \phi_R^2 W + \nabla_V \omega_R \phi_R W, X) +  g_M(\nabla_V \omega_R W, B_R X + C_R X)   \\
&= \cos^2 \theta_R g_M(\nabla_V W, X) -  g_M(\nabla_V \omega_R \phi_R W, X)   \\
&+ g_M(\mathcal{T}_V \omega_R W, B_R X) + g_M(\mathcal{H} \nabla_V
\omega_R W, C_R X)
\end{align*}
so that
\begin{align*}
\sin^2 \theta_R g_M(\nabla_V W, X) =& -  g_M(\nabla_V \omega_R
\phi_R W, X) + g_M(\mathcal{T}_V \omega_R W, B_R X)  \\
& + g_M(\mathcal{H} \nabla_V \omega_R W, C_R X).
\end{align*}
Hence, we get $(a) \Leftrightarrow (b)$, $(a) \Leftrightarrow (c)$,
$(a) \Leftrightarrow (d)$.

Therefore, we obtain the result.
\end{proof}

We recall the following:

\begin{lemma}\cite{BW}\label{lem03}
Let $(M,g_M)$ and $(N,g_N)$ be Riemannian manifolds. Let $F :
(M,g_M) \mapsto (N,g_N)$ be a horizontally conformal submersion with
dilation $\lambda$.

Then we have
\begin{equation}\label{eq28}
\tau(F) = -mF_* H + (2-n)F_* (\nabla \ln \lambda),
\end{equation}
where $\tau(F)$ and $H$ denote the tension field of $F$ and the mean
curvature vector field of  $\ker F_*$, respectively, $m = \dim \ker
F_*$, $n = \dim N$.
\end{lemma}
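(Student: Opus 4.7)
The plan is to compute $\tau(F)=\sum_{a}(\nabla F_*)(e_a,e_a)$ using a local orthonormal frame adapted to the decomposition $TM=\ker F_*\oplus(\ker F_*)^{\perp}$. I will fix a vertical orthonormal frame $\{V_1,\dots,V_m\}$ and a horizontal orthonormal frame $\{X_1,\dots,X_n\}$ on $M$, split the trace into a vertical part $\sum_{i=1}^{m}(\nabla F_*)(V_i,V_i)$ and a horizontal part $\sum_{j=1}^{n}(\nabla F_*)(X_j,X_j)$, and expect these two sums to produce $-mF_*H$ and $(2-n)F_*(\nabla\ln\lambda)$, respectively.

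For the vertical part, since $F_*V_i=0$, the definition of $\nabla F_*$ reduces $(\nabla F_*)(V_i,V_i)$ to $-F_*(\nabla_{V_i}V_i)$. Because $F_*$ annihilates vertical vectors, only the horizontal component $\mathcal{H}\nabla_{V_i}V_i$ survives, and by \eqref{eq4} this equals $\mathcal{T}_{V_i}V_i$. Summing over $i$ and using $mH=\sum_{i=1}^{m}\mathcal{T}_{V_i}V_i$ yields the contribution $-mF_*H$. For the horizontal part, I apply the Baird--Wood formula \eqref{eqn77} with $X=Y=X_j$, obtaining
\[
(\nabla F_*)(X_j,X_j) = 2\,X_j(\ln\lambda)F_*X_j - F_*(\nabla\ln\lambda).
\]
Summing the second term gives $-nF_*(\nabla\ln\lambda)$. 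For the first, I expand $\mathcal{H}\nabla\ln\lambda=\sum_{j}X_j(\ln\lambda)X_j$ in the horizontal frame and then use that $F_*$ kills the vertical component of $\nabla\ln\lambda$, so $\sum_{j} 2X_j(\ln\lambda)F_*X_j = 2F_*(\mathcal{H}\nabla\ln\lambda)=2F_*(\nabla\ln\lambda)$. Combining the two horizontal terms produces $(2-n)F_*(\nabla\ln\lambda)$, and adding the vertical contribution gives \eqref{eq28}.

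There is no genuine obstacle here beyond carefully tracking which components $F_*$ sees; the entire argument rests on the identity $F_*\circ\mathcal{V}=0$, which is what allows $\nabla_{V_i}V_i$ to be replaced by $\mathcal{T}_{V_i}V_i$ and $\nabla\ln\lambda$ to be replaced by its horizontal component under $F_*$. One minor point worth noting is that $\{X_j\}$ is an orthonormal frame on $M$ rather than on $N$ (the images $F_*X_j$ have norm $\lambda$ by horizontal conformality), but since the trace defining $\tau(F)$ is taken with respect to the metric on $M$, this rescaling never enters the calculation.
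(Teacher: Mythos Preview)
Your proof is correct. The paper does not actually prove this lemma; it is quoted verbatim from Baird--Wood \cite{BW} and used as a black box, so there is no in-paper argument to compare against. Your computation is the standard one: splitting the trace along $\ker F_*\oplus(\ker F_*)^\perp$, reducing the vertical summands to $-F_*(\mathcal{T}_{V_i}V_i)$ via $F_*V_i=0$, and handling the horizontal summands with \eqref{eqn77} is exactly how the result is established in the cited reference. The observation that $F_*(\nabla\ln\lambda)=F_*(\mathcal{H}\nabla\ln\lambda)$ is the only subtlety, and you address it explicitly.
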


Using Lemma \ref{lem03} and Theorem \ref{thm1-2}, we obtain

\begin{corollary}
Let $(M,I,J,K,g_M)$ be a hyperk\"{a}hler manifold and $(N, g_N)$ a
Riemannian manifold. Let $F : (M,I,J,K,g_M) \mapsto (N, g_N)$ be an
almost h-conformal slant submersion with $(I,J,K)$ an almost
h-conformal slant basis and $\dim N > 2$. Assume that $\omega_R$ is
parallel with the slant angle $0\leq \theta_R < \frac{\pi}{2}$ for
some $R\in \{ I,J,K \}$. Then the following conditions are
equivalent:

(a) the map $F$ is harmonic.

(b) the map $F$ is horizontally homothetic.
\end{corollary}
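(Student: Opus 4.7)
The plan is to combine the two preceding results (Theorem \ref{thm1-2} and Lemma \ref{lem03}) to reduce the tension field of $F$ to a single term involving $\nabla \ln \lambda$, and then use the hypothesis $\dim N > 2$ to extract equivalence between vanishing of this term and horizontal homotheticity.

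First, I would invoke Theorem \ref{thm1-2}: since $\omega_R$ is parallel for some $R\in\{I,J,K\}$ with slant angle $0\leq \theta_R<\frac{\pi}{2}$, all the fibers of $F$ are minimal, so the mean curvature vector field $H$ of $\ker F_*$ vanishes identically. Substituting $H=0$ into the tension-field formula \eqref{eq28} of Lemma \ref{lem03} yields
\begin{equation*}
\tau(F) = (2-n)\, F_*(\nabla \ln \lambda).
\end{equation*}

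Next, because $n=\dim N>2$, the coefficient $2-n$ is a nonzero constant, so $\tau(F)=0$ if and only if $F_*(\nabla \ln \lambda)=0$. Now write $\nabla \ln \lambda = \mathcal{V}(\nabla\ln\lambda) + \mathcal{H}(\nabla\ln\lambda)$; since $F_*$ kills vertical vectors and, by the horizontally conformal property \eqref{eq1}, is a (conformal) isomorphism on the horizontal distribution, the condition $F_*(\nabla\ln\lambda)=0$ is equivalent to $\mathcal{H}(\nabla\ln\lambda)=0$. This last condition is precisely the definition of $F$ being horizontally homothetic, namely that $X(\ln\lambda)=0$ for every $X\in\Gamma((\ker F_*)^{\perp})$. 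Hence $(a)\Leftrightarrow (b)$.

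There is no real obstacle here beyond a clean bookkeeping: the substantive geometric content has already been done in Theorem \ref{thm1-2} (fibers are minimal under the parallel-$\omega_R$ hypothesis) and in Lemma \ref{lem03} (the general formula for $\tau(F)$). The only point that requires a brief justification is the passage from $F_*(\nabla\ln\lambda)=0$ to $\mathcal{H}(\nabla\ln\lambda)=0$, which follows immediately from the injectivity of $(F_*)_p$ on $(\ker F_*)_p^{\perp}$ guaranteed by the horizontally conformal submersion hypothesis. The assumption $\dim N>2$ is needed exactly to prevent the coefficient $(2-n)$ from degenerating to zero, in which case $\tau(F)$ would vanish automatically and harmonicity would no longer constrain $\lambda$.
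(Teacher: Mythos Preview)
Your proposal is correct and is exactly the argument the paper intends: the paper gives no explicit proof but simply states that the corollary follows from Lemma~\ref{lem03} and Theorem~\ref{thm1-2}, and you have spelled out precisely that deduction. Your extra care in justifying $F_*(\nabla\ln\lambda)=0 \Leftrightarrow \mathcal{H}(\nabla\ln\lambda)=0$ via injectivity of $F_*$ on the horizontal distribution is a welcome clarification that the paper leaves implicit.
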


\begin{corollary}
Let $(M,I,J,K,g_M)$ be a hyperk\"{a}hler manifold and $(N, g_N)$ a
Riemannian manifold. Let $F : (M,I,J,K,g_M) \mapsto (N, g_N)$ be an
almost h-conformal slant submersion with $(I,J,K)$ an almost
h-conformal slant basis and $\dim N = 2$. Assume that $\omega_R$ is
parallel with the slant angle $0\leq \theta_R < \frac{\pi}{2}$ for
some $R\in \{ I,J,K \}$. Then the map $F$ is harmonic.
\end{corollary}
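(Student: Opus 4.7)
The plan is to invoke directly the tension field formula provided by Lemma \ref{lem03}, which reads
$$\tau(F) = -m F_* H + (2-n) F_* (\nabla \ln \lambda),$$
where $m = \dim \ker F_*$, $n = \dim N$, and $H$ is the mean curvature vector field of the vertical distribution $\ker F_*$. The goal is to show both terms on the right-hand side vanish under the present hypotheses.

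First I would exploit the assumption $\dim N = 2$: substituting $n = 2$ collapses the factor $(2-n)$ to zero, so the second term $(2-n) F_* (\nabla \ln \lambda)$ drops out automatically, regardless of the behavior of the dilation $\lambda$. Next I would handle the first term by appealing to Theorem \ref{thm1-2}: since $\omega_R$ is parallel for some $R \in \{I,J,K\}$ and the corresponding slant angle satisfies $0 \le \theta_R < \frac{\pi}{2}$, that theorem guarantees that every fiber of $F$ is minimal, i.e.\ $H = 0$. Consequently the first term $-m F_* H$ also vanishes, yielding $\tau(F) = 0$, which is precisely the definition of $F$ being a harmonic map.

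There is essentially no obstacle here; the corollary is a direct consequence of Lemma \ref{lem03} combined with Theorem \ref{thm1-2}, and the only thing to verify is the numerical cancellation that $\dim N = 2$ provides. The role of the hypothesis $\theta_R < \frac{\pi}{2}$ is only to make the construction of the adapted orthonormal frame $\{e_{2i-1}, \sec \theta_R\, \phi_R e_{2i-1}\}$ in the proof of Theorem \ref{thm1-2} well-defined, so one need not revisit that argument. The proof should therefore occupy only a couple of lines.
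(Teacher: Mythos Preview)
Your proposal is correct and follows exactly the approach the paper intends: the paper states this corollary (together with the preceding one) as a direct consequence of Lemma~\ref{lem03} and Theorem~\ref{thm1-2}, without writing out any further argument. Your two-step reasoning---$n=2$ kills the dilation term and Theorem~\ref{thm1-2} gives $H=0$---is precisely what is meant.
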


\begin{definition}
Let $(M,I,J,K,g_M)$ be a hyperk\"{a}hler manifold and $(N, g_N)$ a
Riemannian manifold. Let $F : (M,I,J,K,g_M) \mapsto (N, g_N)$ be an
almost h-conformal slant submersion with $(I,J,K)$ an almost
h-conformal slant basis. Then given $R\in \{ I,J,K \}$, we call the
map $F$ {\em
 $(\omega_R\ker F_*, \mu^R)$-totally geodesic} if it satisfies $(\nabla F_*)(\omega_R V,X) = 0$
 for $V\in \Gamma(\ker F_*)$ and $X\in \Gamma(\mu^R)$.
\end{definition}

\begin{theorem}
Let $(M,I,J,K,g_M)$ be a hyperk\"{a}hler manifold and $(N, g_N)$ a
Riemannian manifold. Let $F : (M,I,J,K,g_M) \mapsto (N, g_N)$ be an
almost h-conformal slant submersion with $(I,J,K)$ an almost
h-conformal slant basis. Then the following conditions are
equivalent:

(a) the map $F$ is horizontally homothetic.

(b) the map $F$ is $(\omega_I\ker F_*, \mu^I)$-totally geodesic.

(c) the map $F$ is $(\omega_J\ker F_*, \mu^J)$-totally geodesic.

(d) the map $F$ is $(\omega_K\ker F_*, \mu^K)$-totally geodesic.
\end{theorem}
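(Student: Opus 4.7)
The plan is to reduce everything to the horizontal second-fundamental-form identity \eqref{eqn77} applied to a vector of the form $\omega_R V$ and a vector $X\in\mu^R$. By the orthogonal decomposition \eqref{eq09} we have $g_M(\omega_R V,X)=0$, so \eqref{eqn77} collapses to the compact expression
\begin{equation*}
(\nabla F_*)(\omega_R V,X)=(\omega_R V)(\ln\lambda)\,F_*X+X(\ln\lambda)\,F_*\omega_R V
\end{equation*}
for every $V\in\Gamma(\ker F_*)$ and $X\in\Gamma(\mu^R)$, for each $R\in\{I,J,K\}$. This single identity will carry the entire proof.

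The direction $(a)\Rightarrow(b),(c),(d)$ is then immediate: horizontal homotheticity means $\nabla\ln\lambda\in\Gamma(\ker F_*)$, so both $(\omega_R V)(\ln\lambda)=0$ and $X(\ln\lambda)=0$ since $\omega_R V$ and $X$ are horizontal. Hence $(\nabla F_*)(\omega_R V,X)=0$.

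For each of the converses $(b)\Rightarrow(a)$, $(c)\Rightarrow(a)$, $(d)\Rightarrow(a)$, the approach is to exploit the conformality of $F_*$ on $(\ker F_*)^\perp$ to linearly decouple the two terms in the displayed identity. Since $g_N(F_*X,F_*\omega_R V)=\lambda^2 g_M(X,\omega_R V)=0$ and both $F_*X$ and $F_*\omega_R V$ are nonzero whenever $X$ and $\omega_R V$ are nonzero, the two images are linearly independent. Consequently the assumed vanishing $(\nabla F_*)(\omega_R V,X)=0$ forces the two scalar coefficients to vanish separately: $(\omega_R V)(\ln\lambda)=0$ for all $V\in\Gamma(\ker F_*)$, and $X(\ln\lambda)=0$ for all $X\in\Gamma(\mu^R)$. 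Thus $\nabla\ln\lambda$ is $g_M$-orthogonal to both $\omega_R(\ker F_*)$ and $\mu^R$; by \eqref{eq09} this means it is orthogonal to all of $(\ker F_*)^\perp$, i.e.\ $\nabla\ln\lambda\in\Gamma(\ker F_*)$, proving (a).

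The main obstacle, though a mild one, is checking that the decoupling step is legitimate when $\omega_R V$ or $X$ degenerates. If the slant angle $\theta_R=0$ then $\omega_R\equiv 0$ and the implication is vacuous for that $R$ on one side of the decomposition; the argument still goes through using the complementary direction combined with \eqref{eq13}, which gives $|\omega_R V|=\sin\theta_R|V|$. Apart from bookkeeping these trivial cases, the entire argument is a short direct computation from \eqref{eqn77}, \eqref{eq09} and the conformality relation \eqref{eq1}.
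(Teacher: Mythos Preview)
Your argument is essentially identical to the paper's own proof: apply \eqref{eqn77} to the pair $(\omega_R V,X)$ with $X\in\Gamma(\mu^R)$, drop the third term using $g_M(\omega_R V,X)=0$, and then use the linear independence of $F_*X$ and $F_*\omega_R V$ (coming from conformality and orthogonality) to decouple the two coefficients. The paper does exactly this, in the same order and with the same justifications; your added paragraph on the degenerate cases $\theta_R=0$ or $\mu^R=\{0\}$ goes beyond what the paper writes (the paper simply asserts linear independence ``for nonzero $X,V$'' and leaves it at that), though your remark that ``the argument still goes through'' in those cases is not quite accurate---when $\omega_R\equiv 0$ or $\mu^R=\{0\}$ the hypothesis is vacuous and cannot recover~(a), so the theorem tacitly assumes both summands in \eqref{eq09} are nontrivial.
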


\begin{proof}
Given $V\in \Gamma(\ker F_*)$, $X\in \Gamma(\mu^R)$, and $R\in \{
I,J,K \}$, by using (\ref{eqn77}), we get
\begin{align*}
(\nabla F_*)(\omega_R V,X) &= \omega_R V(\ln \lambda) F_*X + X(\ln \lambda) F_* \omega_R V - g_M (\omega_R V, X) F_* (\nabla \ln \lambda)    \\
      &= \omega_R V(\ln \lambda) F_*X + X(\ln \lambda) F_* \omega_R V.
\end{align*}
Since $g_N (F_* X, F_* \omega_R V) = \lambda^2 g_M (X, \omega_R V) =
0$, $\{ F_* X, F_* \omega_R V \}$ is linearly independent for
nonzero $X, V$.

Hence, we obtain $(a) \Leftrightarrow (b)$, $(a) \Leftrightarrow
(c)$, $(a) \Leftrightarrow (d)$.

Therefore, the result follows.
\end{proof}

\begin{theorem}\label{thm: 03-4}
Let $(M,I,J,K,g_M)$ be a hyperk\"{a}hler manifold and $(N, g_N)$ a
Riemannian manifold. Let $F : (M,I,J,K,g_M) \mapsto (N, g_N)$ be an
almost h-conformal slant submersion with $(I,J,K)$ an almost
h-conformal slant basis. Then the following conditions are
equivalent:

(a) the map $F$ is a totally geodesic map.

(b) (i) $C_I (\mathcal{T}_V \phi_I W + \mathcal{H}\nabla_V \omega_I
W) + \omega_I (\widehat{\nabla}_V \phi_I W +  \mathcal{T}_V \omega_I
W) = 0$, (ii) $F$ is horizontally homothetic, (iii)
$C_I(\mathcal{A}_X \phi_I V + \mathcal{H}\nabla_X \omega_I V) +
\omega_I(\mathcal{V}\nabla_X \phi_I V + \mathcal{A}_X \omega_I V) =
0$ for $V,W\in \Gamma(\ker F_*)$ and $X\in \Gamma((\ker
F_*)^{\perp})$.

(c) (i) $C_J (\mathcal{T}_V \phi_J W + \mathcal{H}\nabla_V \omega_J
W) + \omega_J (\widehat{\nabla}_V \phi_J W +  \mathcal{T}_V \omega_J
W) = 0$, (ii) $F$ is horizontally homothetic, (iii)
$C_J(\mathcal{A}_X \phi_J V + \mathcal{H}\nabla_X \omega_J V) +
\omega_J(\mathcal{V}\nabla_X \phi_J V + \mathcal{A}_X \omega_J V) =
0$ for $V,W\in \Gamma(\ker F_*)$ and $X\in \Gamma((\ker
F_*)^{\perp})$.

(d) (i) $C_K (\mathcal{T}_V \phi_K W + \mathcal{H}\nabla_V \omega_K
W) + \omega_K (\widehat{\nabla}_V \phi_K W +  \mathcal{T}_V \omega_K
W) = 0$, (ii) $F$ is horizontally homothetic, (iii)
$C_K(\mathcal{A}_X \phi_K V + \mathcal{H}\nabla_X \omega_K V) +
\omega_K(\mathcal{V}\nabla_X \phi_K V + \mathcal{A}_X \omega_K V) =
0$ for $V,W\in \Gamma(\ker F_*)$ and $X\in \Gamma((\ker
F_*)^{\perp})$.
\end{theorem}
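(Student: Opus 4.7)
The plan is to decompose $TM = \ker F_* \oplus (\ker F_*)^{\perp}$ and check totally geodesicity of $F$ separately on the three pairings of factors: horizontal/horizontal, vertical/vertical, and mixed. For each pairing I will produce the condition listed inside (b) by fixing $R = I$, and the same computation with $R = J$ or $R = K$ will give (c) or (d), yielding the equivalence of (b), (c), (d) as a by-product. The hyperk\"ahler assumption enters in exactly one way: it allows me to write $\nabla_V W = -R^{2}\nabla_V W = -R\,\nabla_V(RW)$ for any $R \in \{I,J,K\}$, which transports the unknown covariant derivative into the decomposition framework \eqref{eq08}--\eqref{eq8}.

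For $X, Y \in \Gamma((\ker F_*)^{\perp})$, I would apply Lemma \ref{lem1} directly. The formula $(\nabla F_*)(X,Y) = X(\ln \lambda) F_*Y + Y(\ln \lambda) F_*X - g_M(X,Y) F_*(\nabla \ln \lambda)$ vanishes for all horizontal $X,Y$ if and only if $\nabla \ln \lambda$ has no horizontal component, i.e., $F$ is horizontally homothetic. This gives condition (ii) in each of (b), (c), (d), and the step is the same as in the proofs of Theorems \ref{int2} and \ref{thm22}.

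For $V, W \in \Gamma(\ker F_*)$ one has $(\nabla F_*)(V,W) = -F_*(\nabla_V W)$, which vanishes iff the horizontal component $\mathcal{T}_V W$ vanishes. To rewrite this vanishing in the form (b)(i), I would expand $RW = \phi_R W + \omega_R W$ via \eqref{eq08}, split both $\nabla_V(\phi_R W)$ and $\nabla_V(\omega_R W)$ into horizontal and vertical parts through \eqref{eq3}--\eqref{eq4}, and then apply $R$ once more using \eqref{eq08}--\eqref{eq8}. Collecting horizontal components, the horizontal part of $R\,\nabla_V(RW)$ is precisely
\[
\omega_R(\widehat{\nabla}_V \phi_R W + \mathcal{T}_V \omega_R W) + C_R(\mathcal{T}_V \phi_R W + \mathcal{H}\nabla_V \omega_R W),
\]
i.e., the LHS of (b)(i). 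Since $R\,\nabla_V(RW) = -\nabla_V W$, this quantity equals $-\mathcal{T}_V W$, so (b)(i) $\Leftrightarrow \mathcal{T}_V W = 0$, and analogously for $R = J, K$.

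For the mixed case $X \in \Gamma((\ker F_*)^{\perp})$, $V \in \Gamma(\ker F_*)$, one has $(\nabla F_*)(X,V) = -F_*(\nabla_X V) = -F_*(\mathcal{A}_X V)$, so vanishing is equivalent to $\mathcal{A}_X V = 0$. The same trick $\nabla_X V = -R\,\nabla_X(RV)$, now expanding $RV = \phi_R V + \omega_R V$ and using the O'Neill decomposition on each summand, identifies the horizontal part of $-R\,\nabla_X(RV)$ with the LHS of (b)(iii), so (b)(iii) $\Leftrightarrow \mathcal{A}_X V = 0$. Combining the three pairings gives (a) $\Leftrightarrow$ (b), and the identical argument with $R = J, K$ produces (a) $\Leftrightarrow$ (c) and (a) $\Leftrightarrow$ (d). There is no real conceptual obstacle; the only task requiring care is the systematic bookkeeping of horizontal versus vertical components in the expansion of $R\,\nabla_V(RW)$ and $R\,\nabla_X(RV)$, which involves all four component tensors $\phi_R, \omega_R, B_R, C_R$ simultaneously.
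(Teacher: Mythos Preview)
Your proposal is correct and follows essentially the same route as the paper: both decompose $(\nabla F_*)$ according to the three pairings, use $\nabla_VW = -R\,\nabla_V(RW)$ (parallelism of $R$) together with the $\phi_R,\omega_R,B_R,C_R$ decompositions to identify the horizontal component, and handle the horizontal/horizontal case via Lemma~\ref{lem1}. The only minor remark is that your reference to Theorems~\ref{int2} and~\ref{thm22} for the equivalence ``$(\nabla F_*)(X,Y)=0$ for all horizontal $X,Y$ $\Leftrightarrow$ $F$ horizontally homothetic'' is not quite on target---those theorems establish different special cases---but the paper supplies exactly the argument you describe (set $X=Y$ in \eqref{eqn77} and pair with $F_*X$) right inside the proof of this theorem, so no gap arises.
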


\begin{proof}
Given $V,W\in \Gamma(\ker F_*)$ and $R\in \{ I,J,K \}$, we have
\begin{align*}
      &(\nabla F_*)(V, W)  \\
      &= F_* (R\nabla_V RW)    \\
      &= F_* (R(\nabla_V (\phi_R W + \omega_R W)))   \\
      &= F_* (R(\mathcal{T}_V \phi_R W + \widehat{\nabla}_V \phi_R W + \mathcal{T}_V \omega_R W + \mathcal{H}\nabla_V \omega_R W))   \\
      &= F_* (B_R \mathcal{T}_V \phi_R W + C_R \mathcal{T}_V \phi_R W + \phi_R \widehat{\nabla}_V \phi_R W + \omega_R \widehat{\nabla}_V \phi_R W
      + \phi_R \mathcal{T}_V \omega_R W   \\
      &+ \omega_R \mathcal{T}_V \omega_R W + B_R \mathcal{H}\nabla_V \omega_R W + C_R \mathcal{H}\nabla_V \omega_R W)   \\
      &= F_* (C_R \mathcal{T}_V \phi_R W + \omega_R \widehat{\nabla}_V \phi_R W + \omega_R \mathcal{T}_V \omega_R W + C_R \mathcal{H}\nabla_V \omega_R W)
\end{align*}
so that
\begin{eqnarray}
&&(\nabla F_*)(V, W) = 0   \label{eq028-1}  \\
&&\Leftrightarrow C_R (\mathcal{T}_V \phi_R W + \mathcal{H}\nabla_V
\omega_R W) + \omega_R (\widehat{\nabla}_V \phi_R W +  \mathcal{T}_V
\omega_R W) = 0.  \nonumber
\end{eqnarray}
We claim that $F$ is horizontally homothetic if and only if $(\nabla
F_*)(X,Y) = 0$ for $X,Y\in \Gamma((\ker F_*)^{\perp})$.

By (\ref{eqn77}), we get
\begin{equation}\label{eq028}
(\nabla F_*)(X,Y) = X(\ln \lambda) F_*Y + Y(\ln \lambda) F_*X - g_M
(X, Y) F_* (\nabla \ln \lambda)
\end{equation}
for $X,Y\in \Gamma((\ker F_*)^{\perp})$ so that the part from left
to right is obtained.

Conversely, from (\ref{eq028}), we have
\begin{equation}\label{eq029}
0 = X(\ln \lambda) F_*Y + Y(\ln \lambda) F_*X - g_M (X, Y) F_*
(\nabla \ln \lambda).
\end{equation}
Applying $X = Y$ at (\ref{eq029}), we obtain
\begin{equation}\label{eq030}
0 = 2X(\ln \lambda) F_*X - g_M (X, X) F_* (\nabla \ln \lambda).
\end{equation}
Taking the inner product with $F_* X$ at (\ref{eq030}), we get
$$
0 = \lambda^2 g_M (X, X) g_M (X, \nabla \ln \lambda),
$$
which implies the result.

Given $V\in \Gamma(\ker F_*)$ and $X\in \Gamma((\ker F_*)^{\perp})$,
we have
\begin{align*}
&(\nabla F_*)(X, V)   \\
&= F_* (R\nabla_X RV)   \\
&= F_* (R\nabla_X (\phi_R V + \omega_R V))   \\
&= F_* (R(\mathcal{A}_X \phi_R V + \mathcal{V}\nabla_X \phi_R V + \mathcal{A}_X \omega_R V + \mathcal{H}\nabla_X \omega_R V))   \\
&= F_* (C_R\mathcal{A}_X \phi_R V + \omega_R\mathcal{V}\nabla_X
\phi_R V + \omega_R\mathcal{A}_X \omega_R V + C_R\mathcal{H}\nabla_X
\omega_R V)
\end{align*}
so that
\begin{eqnarray}
&&(\nabla F_*)(X, V) = 0   \label{eq028-12}  \\
&&\Leftrightarrow C_R(\mathcal{A}_X \phi_R V + \mathcal{H}\nabla_X
\omega_R V) + \omega_R(\mathcal{V}\nabla_X \phi_R V + \mathcal{A}_X
\omega_R V) = 0.  \nonumber
\end{eqnarray}
Hence, we obtain $(a) \Leftrightarrow (b)$, $(a) \Leftrightarrow
(c)$, $(a) \Leftrightarrow (d)$.

Therefore, the result follows.
\end{proof}

We consider a decomposition theorem. Denote by $M_{\ker F_*}$ and
$M_{(\ker F_*)^{\perp}}$ the integral manifolds of $\ker F_*$ and
$(\ker F_*)^{\perp}$, respectively. Using Theorem \ref{thm12} and
Theorem \ref{thm002}, we get

\begin{theorem}\label{thm02}
Let $(M,I,J,K,g_M)$ be a hyperk\"{a}hler manifold and $(N, g_N)$ a
Riemannian manifold. Let $F : (M,I,J,K,g_M) \mapsto (N, g_N)$ be an
almost h-conformal slant submersion with $(I,J,K)$ an almost
h-conformal slant basis. Then the following conditions are
equivalent:

(a) $(M,g_M)$ is locally a Riemannian product manifold of the form
$M_{(\ker F_*)^{\perp}} \times M_{\ker F_*}$.

(b)
\begin{align*}
&\lambda^{-2}g_N(\nabla_X^F F_* Y, F_* \omega_I \phi_I V) -
\lambda^{-2}g_N(\nabla_X^F F_* C_I Y, F_* \omega_I V) \\
&= g_M(\mathcal{A}_X B_I Y, \omega_I V)  \\
&+ g_M(-X(\ln \lambda) C_I Y - C_I Y(\ln \lambda) X + g_M(X, C_I Y)
(\nabla \ln \lambda), \omega_I V) \\
&- g_M(-X(\ln \lambda) Y - Y(\ln \lambda) X + g_M(X, Y) (\nabla \ln
\lambda), \omega_I \phi_I V),
\end{align*}
$$
g_M(\nabla_V \omega_I \phi_I W, X) = g_M(\mathcal{T}_V \omega_I W,
B_I X) + g_M(\mathcal{H} \nabla_V \omega_I W, C_I X)
$$
for $X,Y\in \Gamma((\ker F_*)^{\perp})$ and $V,W\in \Gamma(\ker
F_*)$.

(c)
\begin{align*}
&\lambda^{-2}g_N(\nabla_X^F F_* Y, F_* \omega_J \phi_J V) -
\lambda^{-2}g_N(\nabla_X^F F_* C_J Y, F_* \omega_J V) \\
&= g_M(\mathcal{A}_X B_J Y, \omega_J V)  \\
&+ g_M(-X(\ln \lambda) C_J Y - C_J Y(\ln \lambda) X + g_M(X, C_J Y)
(\nabla \ln \lambda), \omega_J V) \\
&- g_M(-X(\ln \lambda) Y - Y(\ln \lambda) X + g_M(X, Y) (\nabla \ln
\lambda), \omega_J \phi_J V),
\end{align*}
$$
g_M(\nabla_V \omega_J \phi_J W, X) = g_M(\mathcal{T}_V \omega_J W,
B_J X) + g_M(\mathcal{H} \nabla_V \omega_J W, C_J X)
$$
for $X,Y\in \Gamma((\ker F_*)^{\perp})$ and $V,W\in \Gamma(\ker
F_*)$.

(d)
\begin{align*}
&\lambda^{-2}g_N(\nabla_X^F F_* Y, F_* \omega_K \phi_K V) -
\lambda^{-2}g_N(\nabla_X^F F_* C_K Y, F_* \omega_K V) \\
&= g_M(\mathcal{A}_X B_K Y, \omega_K V)  \\
&+ g_M(-X(\ln \lambda) C_K Y - C_K Y(\ln \lambda) X + g_M(X, C_K Y)
(\nabla \ln \lambda), \omega_K V) \\
&- g_M(-X(\ln \lambda) Y - Y(\ln \lambda) X + g_M(X, Y) (\nabla \ln
\lambda), \omega_K \phi_K V),
\end{align*}
$$
g_M(\nabla_V \omega_K \phi_K W, X) = g_M(\mathcal{T}_V \omega_K W,
B_K X) + g_M(\mathcal{H} \nabla_V \omega_K W, C_K X)
$$
for $X,Y\in \Gamma((\ker F_*)^{\perp})$ and $V,W\in \Gamma(\ker
F_*)$.
\end{theorem}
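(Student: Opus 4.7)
The plan is to reduce the decomposition statement to the two already-proved foliation theorems by invoking the classical local de Rham decomposition. Recall that if $M$ admits two complementary orthogonal distributions $\mathcal{D}_1$ and $\mathcal{D}_2$ with $TM = \mathcal{D}_1 \oplus \mathcal{D}_2$, then $(M,g_M)$ is locally a Riemannian product $M_{\mathcal{D}_1} \times M_{\mathcal{D}_2}$ if and only if each $\mathcal{D}_i$ is integrable and defines a totally geodesic foliation on $M$. In our setting $\mathcal{D}_1 = (\ker F_*)^{\perp}$ and $\mathcal{D}_2 = \ker F_*$; note that $\ker F_*$ is automatically integrable as the vertical distribution of a submersion, while integrability of $(\ker F_*)^{\perp}$ follows from it defining a totally geodesic foliation.

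First I would observe that the second equation displayed in each of items (b), (c), (d) of Theorem \ref{thm02} is exactly condition (b), (c), (d) of Theorem \ref{thm002}, and therefore is equivalent, for the respective choice of $R\in\{I,J,K\}$, to $\ker F_*$ defining a totally geodesic foliation on $M$. Next, the first displayed equation in each of (b), (c), (d) of Theorem \ref{thm02} matches exactly the corresponding condition of Theorem \ref{thm12}, so it is equivalent to $(\ker F_*)^{\perp}$ defining a totally geodesic foliation on $M$. In particular this also forces integrability of $(\ker F_*)^{\perp}$.

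Combining these two observations, each of items (b), (c), (d) of Theorem \ref{thm02} is equivalent to the conjunction: both $\ker F_*$ and $(\ker F_*)^{\perp}$ define totally geodesic foliations on $M$. By the local de Rham decomposition recalled above, this conjunction is equivalent to (a), i.e.\ $(M,g_M)$ being locally the Riemannian product $M_{(\ker F_*)^{\perp}} \times M_{\ker F_*}$. Chaining the equivalences gives $(a) \Leftrightarrow (b) \Leftrightarrow (c) \Leftrightarrow (d)$.

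There is no real computational obstacle here, because the hard slant-geometry identities have already been packaged into Theorems \ref{thm12} and \ref{thm002}. The only subtle point to address carefully is that in each of (b), (c), (d) of Theorem \ref{thm02}, only a single $R\in\{I,J,K\}$ appears, yet the conclusion (a) is invariant under the choice of $R$; this is handled precisely because Theorems \ref{thm12} and \ref{thm002} each show that the conditions for the three different choices of $R$ are mutually equivalent. Thus the role of the hyperk\"ahler hypothesis (used to apply Theorems \ref{thm12} and \ref{thm002}) is exactly to guarantee that any one of the three conditions suffices.
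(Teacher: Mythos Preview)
Your proposal is correct and matches the paper's approach exactly: the paper simply states that Theorem~\ref{thm02} follows from Theorems~\ref{thm12} and~\ref{thm002}, relying implicitly on the standard local de Rham decomposition criterion you spelled out. Your write-up is in fact more detailed than the paper's, which gives no explicit proof.
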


\section{Examples}

With coordinates $(x_1,x_2,\cdots,x_{4m})$ on the Euclidean space
$\mathbb{R}^{4m}$ , we choose the following complex structures $I, J, K$ on
$\mathbb{R}^{4m}$:
\begin{align*}
  &I(\tfrac{\partial}{\partial x_{4k+1}})=\tfrac{\partial}{\partial x_{4k+2}},
  I(\tfrac{\partial}{\partial x_{4k+2}})=-\tfrac{\partial}{\partial x_{4k+1}},
  I(\tfrac{\partial}{\partial x_{4k+3}})=\tfrac{\partial}{\partial x_{4k+4}},
  I(\tfrac{\partial}{\partial x_{4k+4}})=-\tfrac{\partial}{\partial x_{4k+3}},     \\
  &J(\tfrac{\partial}{\partial x_{4k+1}})=\tfrac{\partial}{\partial x_{4k+3}},
  J(\tfrac{\partial}{\partial x_{4k+2}})=-\tfrac{\partial}{\partial x_{4k+4}},
  J(\tfrac{\partial}{\partial x_{4k+3}})=-\tfrac{\partial}{\partial x_{4k+1}},
  J(\tfrac{\partial}{\partial x_{4k+4}})=\tfrac{\partial}{\partial x_{4k+2}},    \\
  &K(\tfrac{\partial}{\partial x_{4k+1}})=\tfrac{\partial}{\partial x_{4k+4}},
  K(\tfrac{\partial}{\partial x_{4k+2}})=\tfrac{\partial}{\partial x_{4k+3}},
  K(\tfrac{\partial}{\partial x_{4k+3}})=-\tfrac{\partial}{\partial x_{4k+2}},
  K(\tfrac{\partial}{\partial x_{4k+4}})=-\tfrac{\partial}{\partial x_{4k+1}}
\end{align*}
for $k\in \{ 0,1,\cdots,m-1 \}$.

Then we can see that $(I,J,K,g)$ is a hyperk\"{a}hler structure on
$\mathbb{R}^{4m}$, where $g$ stands for the Euclidean metric on
$\mathbb{R}^{4m}$. Throughout this section, we will maintain the same notational conventions.

\begin{example}
Let $\pi : TM \mapsto M$ be the natural projection, where $(M, E,
g)$ is an almost quaternionic Hermitian manifold. Then the map $\pi$
is an h-conformal slant submersion with the h-slant angle $\theta =
0$ and dilation $\lambda = 1$ \cite{IMV}.
\end{example}

\begin{example}
Let $(N,g_N)$ be a $(4m-1)$-dimensional Riemannian manifold and
$(M,E,g_M)$ a $4m$-dimensional almost quaternionic Hermitian
manifold . Let $F : (M,E,g_M) \mapsto (N,g_N)$ be a horizontally
conformal submersion with dilation $\lambda$, where $\lambda : M
\mapsto \mathbb{R}$ is a smooth positive function. Then the map $F$
is an h-conformal slant submersion with the h-slant angle $\theta =
\frac{\pi}{2}$ and dilation $\lambda$.
\end{example}

\begin{example}
Let $A\subset \mathbb{R}^{4m}$ and $B\subset \mathbb{R}^{4m-1}$ be
open domains for a positive integer $m$. Let $F : A \mapsto B$ be a
horizontally conformal submersion with dilation $\lambda$, where
$\lambda : A \mapsto \mathbb{R}$ is a smooth positive function. Then
the map $F$ is an h-conformal slant submersion with the h-slant
angle $\theta = \frac{\pi}{2}$ and dilation $\lambda$.
\end{example}

\begin{example}
Let $(M_1,E_1,g_{M_1})$ and $(M_2,E_2,g_{M_2})$ be almost
quaternionic Hermitian manifold with $\text{dim} M_1 = 4n$ and
$\text{dim} M_2 = 4m$ for positive integers $n$ and $m$. And let
$(N_1,g_1')$ and $(N_2,g_2')$ be Riemannian manifold with
$\text{dim} N_1 = 4n-1$ and $\text{dim} N_2 = 4m-1$. Let $F_i :
(M_i,E_i,g_{M_i}) \mapsto (N_i,g_i')$ be a Riemannian submersion for
$i\in \{ 1,2 \}$. Consider the map $F : M_1\times M_2 \mapsto
N_1\times N_2$ given by
$$
F(x,y)= \lambda(F_1(x), F_2(y)) \quad \text{for} \ x\in M_1 \
\text{and} \ y\in M_2.
$$
Then the map $F$ is an h-conformal slant submersion with the h-slant
angle $\theta = \frac{\pi}{2}$ and dilation $\lambda$.
\end{example}

\begin{example}\label{exam1}
We define a map $F : \mathbb{R}^4 \mapsto \mathbb{R}^2$ by
$$
F(x_1,\cdots,x_4) = e^{2}(x_4, x_3).
$$
Then the map $F$ is an almost h-conformal slant submersion with the
slant angles $\{ \theta_I = 0, \theta_J = \frac{\pi}{2}, \theta_K =
\frac{\pi}{2} \}$ and dilation $\lambda = e^{2}$.
\end{example}

\begin{example}
We define a map $F : \mathbb{R}^8 \mapsto \mathbb{R}^4$ by
$$
F(x_1,\cdots,x_8) = e^{4}(\frac{x_1 - x_4}{\sqrt{2}}, \frac{x_5 -
x_8}{\sqrt{2}}, x_7, x_2).
$$
Then the map $F$ is an almost h-conformal slant submersion with the
slant angles $\{ \theta_I = \frac{\pi}{4}, \theta_J = \frac{\pi}{4},
\theta_K = \frac{\pi}{2} \}$ and dilation $\lambda = e^{4}$.
\end{example}

\begin{example}
We define a map $F : \mathbb{R}^{4} \mapsto \mathbb{R}^2$ by
$$
F(x_1,\cdots,x_{4}) = e^5(x_1 \cos \alpha - x_3 \sin \alpha, x_2
\sin \beta - x_4 \cos \beta).
$$
Then the map $F$ is an almost h-conformal slant submersion with the
slant angles $\{ \theta_I, \theta_J = \frac{\pi}{2}, \theta_K \}$
and dilation $\lambda = e^{5}$ such that $\cos \theta_I = |\sin
(\alpha + \beta)|$ and $\cos \theta_K = |\cos (\alpha + \beta)|$.
\end{example}

\begin{example}
We define a map $F : \mathbb{R}^{4} \mapsto \mathbb{R}^2$ by
$$
F(x_1,\cdots,x_{4}) = e^7(x_1, \frac{\sqrt{3}}{2} x_2 - \frac{1}{2}
x_4).
$$
Then the map $F$ is an almost h-conformal slant submersion with the
slant angles $\{ \theta_I = \frac{\pi}{6}, \theta_J = \frac{\pi}{2},
\theta_K = \frac{\pi}{3} \}$ and dilation $\lambda = e^{7}$.
\end{example}

\section*{Acknowledgments}
This research was supported by Basic Science Research Program through
the National Research Foundation of Korea(NRF)
 funded by the Ministry of Education (NRF-2016R1D1A1B03930449).


\end{document}